\theoremstyle{plain}
\newtheorem{thm}{Theorem}[section]
\newtheorem{prop}[thm]{Proposition}
\newtheorem{lem}[thm]{Lemma}
\theoremstyle{definition}
\newtheorem{exa}[thm]{Example}
\theoremstyle{remark}
\newcommand{\Sh}{\mathrm{Sh}}
\newcommand{\Fer}{\mathrm{Fer}}
\newcommand{\eqdef}{\stackrel{\rm def}{=}}
\newcommand{\St}{\mathcal {ST}}
\newcommand{\Irr}{\mathrm{Irr}}
\newcommand{\GCD}{\mathrm{GCD}}
\newcommand{\Inv}{\mathrm{Inv}}
\newcommand{\Id}{\mathrm{Id}}
\newcommand{\inv}{\mathrm{inv}}
\newcommand{\Pair}{\mathrm{Pair}}
\newcommand{\pair}{\mathrm{pair}}
\newcommand{\fix}{\mathrm{fix}}
\newcommand{\Ind}{\mathrm{Ind}}
\newcommand{\Span}{\mathrm{Span}}
\author{Fabrizio Caselli}\author {Roberta Fulci}
\title{Refined Gelfand models for wreath products}
\address{Dipartimento di matematica, Universit\`a di Bologna \\Piazza di Porta San Donato 5 \\ Bologna 40126, Italy}
\begin{document}
\thanks{\emph{MSC:} 05E15}
\keywords{Reflection groups, models, absolute involutions.}
\maketitle
\begin{abstract}
In [F. Caselli, Involutory reflection groups and their models, J. Algebra 24 (2010), 370--393] it is constructed a uniform Gelfand model for all non-exceptional irreducible complex reflection groups which are involutory. This model can be naturally decomposed into the direct sum of submodules indexed by symmetric conjugacy classes, and in this paper we present a simple combinatorial description of the irreducible decomposition of these submodules if the group is the wreath product of a cyclic group with a symmetric group. This is  attained by showing that such decomposition is compatible with the generalized Robinson-Schensted correspondence for these groups.
\end{abstract}

\section{Introduction}

A Gelfand model of a finite group $G$ is a $G$-module isomorphic to the multiplicity-free sum of all its irreducible complex representations. In \cite{Ca2}, a Gelfand model is constructed for all \emph{involutory} reflection groups, a class of finite complex reflection groups which contains all infinite families of irreducible Coxeter groups and all the wreath products $G(r,n)$. More precisely, a finite subgroup $G$ of $GL(n,\mathbb{C})$ will be called involutory if the dimension of its Gelfand model is equal to the number of its \textit{absolute involutions}, i.e.  elements $g$ satisfying $g \bar{g}=1$, where $\bar g$ denotes the entrywise complex conjugate of $g$. If we restrict our attention to complex reflection groups of the form $G(r,p,n)$, we have the following result (\cite[Theorem 4.5]{Ca2}).

\begin{thm} Let $G$ be a complex reflection group of the form $G(r,p,n)$. Then $G$ is involutory if and only if $\GCD(p,n)=1,2$.
\end{thm}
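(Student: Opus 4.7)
The plan is to evaluate both sides of the involutory equation for $H=G(r,p,n)$ and compare. Writing elements of $G=G(r,n)$ as pairs $(\sigma,v)$ with $\sigma\in S_n$ and $v\in(\mathbb{Z}_r)^n$, entrywise conjugation sends $(\sigma,v)\mapsto(\sigma,-v)$, so an absolute involution is a pair with $\sigma^2=\Id$ and $\sigma\cdot v=v$, and it lies in $H$ iff $\sum v_i\equiv 0\pmod p$. Stratifying by the number $k$ of fixed points of $\sigma$, a $\sigma$-invariant $v$ is determined by $k+m$ free values in $\mathbb{Z}_r$ ($m=(n-k)/2$ being the number of two-cycles), and the congruence reads $\sum a_i+2\sum b_j\equiv 0\pmod p$; its solutions number $r^{k+m}/p$ for $k\ge 1$, but $r^m/p$ or $2r^m/p$ for $k=0$ according as $p$ is odd or even. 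A direct sum over involution classes of $S_n$ gives
\[
|\AFix(H)|=\frac{|\AFix(G)|}{p}+\varepsilon\cdot\frac{r^{n/2}}{p}\cdot\frac{n!}{2^{n/2}(n/2)!},
\]
where $\varepsilon=1$ when $n$ and $p$ are both even and $\varepsilon=0$ otherwise, the extra term coming entirely from the fixed-point-free involutions (perfect matchings) of $S_n$.

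On the representation side, Clifford theory for the cyclic-index-$p$ normal inclusion $H\triangleleft G$ identifies $\sum_{\tau\in\Irr(H)}\dim\tau$ with $\sum_{[\lambda]}\dim\rho_\lambda$, the latter sum running over $\mathbb{Z}_p$-orbits of $r$-tuples $\lambda$ of partitions of total size $n$; the $\mathbb{Z}_p$-action cyclically shifts the components by $r/p$ (dual to tensoring by a generator of $\widehat{G/H}$). Burnside rewrites this as $\frac{1}{p}\sum_{k=0}^{p-1}F_k$, with $F_k=\sum_{\sigma_k\cdot\lambda=\lambda}\dim\rho_\lambda$ and $\sigma_k$ the shift by $kr/p$. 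Here $F_0=\sum_\lambda\dim\rho_\lambda=|\AFix(G)|$ is the $p=1$ case. For $k\ne0$, set $q=p/\GCD(k,p)$: a $\sigma_k$-fixed $\lambda$ must be $q$-fold periodic in its $r$ components, so $F_k=0$ unless $q\mid n$. When $q=2$, expanding a fixed $\lambda$ as a $2$-fold repetition of a free $(r/2)$-tuple $\mu$ and applying the classical identity $\sum_{|\mu|=m}(f^\mu)^2=m!$ collapses the dimension formula to
\[
F_k=n!\,[x^{n/2}]\,e^{(r/2)x}=\frac{n!\,r^{n/2}}{2^{n/2}(n/2)!},
\]
which is exactly $p$ times the perfect-matching correction in $|\AFix(H)|$.

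The three cases of the theorem now follow. If $\GCD(p,n)=1$, no $q>1$ dividing $p$ divides $n$, every $F_k$ with $k\ne0$ vanishes, and both sides equal $|\AFix(G)|/p$; $H$ is involutory. If $\GCD(p,n)=2$ (forcing $n,p$ both even), the only nonzero $F_k$ with $k\ne0$ is $F_{p/2}$, and the calculation above shows $F_{p/2}/p$ matches the perfect-matching correction; $H$ is again involutory. If $\GCD(p,n)>2$, some divisor $q>2$ of $p$ also divides $n$, producing at least one additional strictly positive $F_k$; by positivity $\sum\dim\tau$ strictly exceeds $|\AFix(H)|$, so $H$ is not involutory. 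The main obstacle I foresee is the explicit evaluation $F_{p/2}=r^{n/2}\cdot n!/(2^{n/2}(n/2)!)$ via the $q=2$ specialization of $\sum(f^\mu)^q$, since the analogous sums for other values of $q$ do not simplify as cleanly; fortunately only $q=2$ needs to match on the nose, with the other $q>2$ terms handled by a positivity/monotonicity argument.
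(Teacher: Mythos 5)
Your argument is essentially correct, but note first that the paper you are working from does not prove this statement at all: it is quoted verbatim from \cite[Theorem 4.5]{Ca2}, so the only comparison available is with the proof there, which likewise proceeds by evaluating both sides of the involutory identity. Your double count is sound: the enumeration of absolute involutions $(\sigma,v)$ with $\sigma^2=\Id$, $v$ constant on the cycles of $\sigma$, and $\sum v_i\equiv 0\pmod p$ correctly isolates the fixed-point-free stratum as the only one where the congruence $2\sum b_j\equiv 0\pmod p$ fails to be equidistributed, and on the representation side the identification of $\sum_{\tau\in\Irr(H)}\dim\tau$ with $\sum_{[\lambda]}\dim\rho_\lambda$ is legitimate because $G/H$ is cyclic, so restrictions are multiplicity-free and each $\widehat{G/H}$-orbit contributes exactly $\dim\rho_\lambda$. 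The weighted Burnside rewriting, the $q$-periodicity criterion for $\sigma_k$-fixed tuples, and the evaluation $F_{p/2}=n!\,r^{n/2}/(2^{n/2}(n/2)!)$ via $\sum_{|\mu|=m}(f^\mu)^2=m!$ all check out, and they do match $p$ times the matching correction. Two points deserve to be made explicit. First, $F_0=|\AFix(G(r,n))|$ is exactly the assertion that $G(r,n)$ itself is involutory; this is not free, but it does follow from the Stanton--White correspondence recalled in \S 2 together with $\dim\rho_\lambda=|\St_\lambda|$, so you should cite that rather than call it ``the $p=1$ case.'' Second, in the direction $\GCD(p,n)>2$ you can avoid any discussion of prime factors: $d=\GCD(p,n)$ is itself a divisor of both $p$ and $n$ exceeding $2$, so $k=p/d$ gives $q=d$ and a strictly positive $F_{p/d}$ distinct from $F_{p/2}$, which is all the positivity argument needs. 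With those two clarifications the proof is complete and is, in spirit, the same dimension-count-versus-involution-count argument used in \cite{Ca2}, organized somewhat more explicitly through the $\mathbb{Z}_p$-orbit structure on $\Fer(r,n)$.
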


The Gelfand model for $G(r,p,n)$ constructed in \cite{Ca2} is based on the theory of projective reflection groups introduced in \cite{Ca1}. If  $G=G(r,n)$, the setting is much simpler and the model $(M, \varrho)$  for such groups looks as follows.

\begin{itemize}
 \item $M$ is the vector space having a basis indexed by the set $I(r,n)$ of the absolute involutions of $G(r,n)$, :
$$M=\bigoplus_{v \in I(r,n)}\mathbb{C}\,\,C_v$$

\item the morphism $\varrho:G(r,n)\rightarrow GL(M) $ has the form
$$\varrho(g)(v)=\phi_g(v)C_{|g|v|g|^{-1}},$$
where $\phi_g(v)$ is a scalar and $|g|$ is the natural projection of $g\in G(r,n)$ into $S_n$ which ``forgets'' colors.
\end{itemize}

A more precise description of the model (and of the notation used) is deferred to \S\ref{NotPre}. Nevertheless, what we know about this model is already enough to observe that there is an immediate decomposition of $M$ into submodules. To describe this, we need one further definition. If $g, h \in G(r,n)$ we say that $g$ and $h$ are $S_n$-conjugate if there
exists $\sigma \in S_n$ such that $g=\sigma h\sigma^{-1}$, and we call \textit{$S_n$-conjugacy classes}, or \textit{symmetric conjugacy classes}, the corresponding equivalence classes.
If $c$ is a $S_n$-conjugacy class of absolute involutions in $I(r,n)$
we denote by $M(c)$ the subspace of $M$ spanned by the basis elements $C_v$ indexed by the absolute involutions
$v$ belonging to the class $c$, and it is clear that
$$
M=\bigoplus_c M(c)\,\,\,\textrm{ as $G$-modules,}
$$
where the sum runs through all $S_n$-conjugacy classes of absolute
involutions in $I(r,n)$. It is natural to ask if we can describe the
irreducible decomposition of the submodules $M(c)$. This
decomposition is known if $G$ is the symmetric group $S_n=G(1,n)$ (see
\cite{IRS,APR1}). We will show that the irreducible decomposition of these submodules is
well behaved with respect to the generalized Robinson-Schensted correspondence (see \S\ref{NotPre}) introduced and studied by Stanton and White \cite{SW}, so answering to a problem raised in \cite{Ca2} (see also \cite{APR} for an analogous question). 

Let us briefly clarify the meaning of 'well behaved with respect to the Robinson-Schensted correspondence'.

The irreducible representations of $G(r,n)$ are naturally parametrized by the
elements of the set $\Fer(r,n)$, i.e. the set of  $r$-tuples of Ferrers diagrams
$(\lambda^{(0)},\ldots,\lambda^{(r-1)})$ with $\sum |\lambda^{(i)}|=n$ (see Proposition \ref{rapp di grn}), and we denote by $\rho_{\lambda^{(0)},\ldots,\lambda^{(r-1)}}$ the irreducible representation of $G(r,n)$ corresponding to the $r$-tuple $(\lambda^{(0)},\ldots,\lambda^{(r-1)})\in \Fer(r,n)$.
If $v$ is an absolute involution in $G(r,n)$, 
we denote by $\Sh(v)$ the element of $\Fer(r,n)$ which is the shape of the multitableaux of the image of $v$
via the generalized Robinson-Schensted correspondence. Namely, we let
$$\Sh(v)\eqdef(\lambda^{(0)},\ldots,\lambda^{(r-1)}),$$
 where 
$$v\stackrel{RS}{\longrightarrow}[(P_0,\ldots,P_{r-1}),(P_0,\ldots,P_{r-1})], \quad P_i \mbox{ of shape }\lambda^{(i)}.$$
For notational convenience we also let $\Sh(c)=\cup_{v\in c}\Sh(v)\subset \Fer(r,n)$ and we are now ready to state the main result of this work.
\begin{thm}\label{mainth}
Let $c$ be a $S_n$-conjugacy class of absolute involutions in $G(r,n)$. Then
the following decomposition holds:
$$M(c) \cong \bigoplus_{\begin{subarray}{c}
(\lambda^{(0)},\ldots,\lambda^{(r-1)})\in \Sh(c)\\
 \end{subarray}}
  \rho_{\lambda^{(0)},\ldots,\lambda^{(r-1)}}.$$
\end{thm}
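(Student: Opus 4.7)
The approach is to combine the Gelfand property of $M$ with a dimension count coming from the Stanton--White correspondence, and a combinatorial description of the $S_n$-conjugacy classes of absolute involutions. Since $M$ is Gelfand we have $M\cong\bigoplus_{\lambda\in\Fer(r,n)}\rho_\lambda$ with each $\rho_\lambda$ appearing exactly once, and because $M=\bigoplus_c M(c)$ is a decomposition as $G(r,n)$-modules, every $\rho_\lambda$ lies in a unique submodule, call it $M(c(\lambda))$. The theorem is equivalent to showing that $c(\lambda)=c$ if and only if $\lambda\in\Sh(c)$, and I would reduce this to two claims: (i) $\Sh(v)=\Sh(w)$ forces $v$ and $w$ to be $S_n$-conjugate, so that the sets $\Sh(c)$ partition $\Fer(r,n)$, and (ii) for every $v\in c$ the irreducible $\rho_{\Sh(v)}$ is a subrepresentation of $M(c)$.

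Step (i) is the combinatorial heart. It generalises the classical fact for $S_n=G(1,n)$, where the RS shape of an involution records its number of fixed points (equal to the number of odd columns of $\lambda$) and hence determines its conjugacy class. For $r>1$ one exploits the block structure of the Stanton--White algorithm: the cycles of $|v|$ are first partitioned according to the colour content read from $v$, and ordinary RS is then run inside each block. The shape-tuple $(\lambda^{(0)},\ldots,\lambda^{(r-1)})$ records precisely the data that an $S_n$-conjugation preserves, namely the cycle type together with its colour decoration up to simultaneous relabelling, so two absolute involutions producing the same shape-tuple must be $S_n$-conjugate. Combined with the Stanton--White bijection, step (i) already yields the dimension equality
$$\dim M(c)=|c|=\sum_{\lambda\in\Sh(c)}\prod_i f^{\lambda^{(i)}}=\sum_{\lambda\in\Sh(c)}\dim\rho_\lambda.$$

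For step (ii) I would use that the basis $\{C_v\}_{v\in c}$ of $M(c)$ is a single $G(r,n)$-orbit of lines under $\varrho$: the stabilizer of $\mathbb{C}C_v$ is $G_v=\{g:|g|v|g|^{-1}=v\}$, on which $\varrho$ acts by a linear character $\psi_v(g)=\phi_g(v)$, giving
$$M(c)\cong\mathrm{Ind}_{G_v}^{G(r,n)}\psi_v.$$
Frobenius reciprocity then reduces step (ii) to producing a nonzero $G_v$-equivariant map $\psi_v\to\rho_{\Sh(v)}|_{G_v}$ for each $v$, and together with the dimension identity above this forces the full decomposition. The hard part will be this last matching: it requires explicit control both of the centraliser $G_v$, described through the cycle and colour data of $v$, and of the character $\psi_v$ inherited from the model of~\cite{Ca2}, and then an identification of $\psi_v$ with the character by which $G_v$ acts on a distinguished vector of $\rho_{\Sh(v)}$ built from the multitableau $(P_0,\ldots,P_{r-1})$ attached to $v$ under Stanton--White.
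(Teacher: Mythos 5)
Your overall framework is sound and in fact parallels the paper's: the identification $M(c)\cong\Ind_{G_v}^{G(r,n)}\psi_v$ for $v\in c$ is exactly how the paper handles each class, your step (i) is Proposition 3.2 of the paper combined with the Stanton--White bijection (and it is correct that $\Sh(v)$ determines the $S_n$-class, since the number of odd columns of $\lambda^{(i)}$ recovers $\fix_i(v)$ and the size recovers $\pair_i(v)$), and the final ``dimension count forces equality'' step is also how the paper closes its argument. The minor slip that $\dim\rho_{\lambda^{(0)},\ldots,\lambda^{(r-1)}}$ is $\binom{n}{n_0,\ldots,n_{r-1}}\prod_i f^{\lambda^{(i)}}$ rather than $\prod_i f^{\lambda^{(i)}}$ does not affect the identity $\dim M(c)=\sum_{\lambda\in\Sh(c)}\dim\rho_\lambda$, since in either normalization $\dim\rho_\lambda=|\St_\lambda|$.

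The genuine gap is step (ii), which you correctly flag as ``the hard part'' but for which you offer only a restatement of the problem. Showing that $\langle\psi_v,\rho_{\Sh(v)}\downarrow_{G_v}\rangle\neq 0$ --- i.e.\ producing a vector in $\rho_{\Sh(v)}$ on which the full stabilizer $G_v$ acts by the character $\psi_v(g)=\zeta_r^{\langle g,v\rangle}(-1)^{\inv_v(g)}$ --- is essentially equivalent in difficulty to the theorem itself, and there is no off-the-shelf ``distinguished vector'' attached to the multitableau $(P_0,\ldots,P_{r-1})$ with this property; in particular the sign $(-1)^{\inv_v(g)}$ is not visible in any standard model of $\rho_{\Sh(v)}$. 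The paper avoids this direct matching entirely: it first treats the fixed-point-free classes by an inductive characterization (Proposition 3.5) of the multiplicity-free sum of all even-diagram representations via the two conditions ``contains the one-rowed representations'' and ``$\Pi_m\downarrow_{B_{2m-1}}\cong\Pi_{m-1}\uparrow^{B_{2m-1}}$'', verifies these by explicit character computations on the stabilizers, converts the auxiliary representation $\varphi$ into $\varrho$ by a conjugation lemma handling the $(-1)^{\inv}$ twist, and finally reaches the general class by inducing from $B_{2m}\times B_{n-2m}$ and applying a type-$B$ Pieri rule. To complete your proof you would need to supply an argument of comparable substance for step (ii); as written, the proposal establishes only that the multiset of dimensions is consistent with the claimed decomposition, not the decomposition itself.
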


For the reader's convenience we will treat the case of the  Weyl groups $B_n\eqdef G(2,n)$ of type $B$ in full detail, and we will describe afterwards in \S\ref{grn} the outline of the proof for the general case of wreath products, focusing in particular on how the proof for $B_n$ should be adapted in this general case. 


The paper is organized as follows. In \S\ref{NotPre} we  collect the notation and the preliminary results which are needed, including a description of the generalized Robinson-Schensted correspondence for wreath products studied by Stanton and White in \cite{SW} and the definition of the Gelfand model for $G(r,n)$ constructed by the first author in \cite{Ca2}. In \S\ref{BnLemmone} we generalize an idea appearing in \cite{IRS} to provide a characterization in terms of inductions and restrictions of a representation of $B_n$ which is the multiplicity-free sum of all irreducible representations indexed by pairs of Ferrers diagrams having all rows of even length. In \S\ref{parz}, which is really the heart of the paper, using the characterization obtained in the previous section, we describe a partial result of Theorem \ref{mainth} which is the irreducible decomposition of the submodule $M(c)$ corresponding to the symmetric conjugacy class $c$ of involutions with no fixed points and with a given number of negative entries. The proof of the full result appears then in \S\ref{full}, where we make use of some results which are the analogous in type $B$ of very well-known facts about symmetric groups.
Finally, in \S\ref{grn}, we sketch a proof of the general result for the wreath products $G(r,n)$.

We end this introduction by mentioning that the more involved case of involutory reflection groups of the form $G(r,p,n)$ with $\GCD(p,n)=1,2$ will be treated by the authors in a forthcoming paper that makes use of the results of the present work. 
\section{Notation and prerequisites}\label{NotPre}

In this section we collect the notation that is used in this paper as well as the preliminary results that are needed.

We let $\mathbb Z $ be the set of integer numbers and $\mathbb N$ be the set of nonnegative integer numbers. For $a,b\in \mathbb Z$, with $a\leq b$ we let $[a,b]=\{a,a+1,\ldots,b\}$ and, for $n\in \mathbb N$ we let $[n]\eqdef[1,n]$. For $r\in\mathbb N$, $r>0$, we let $\mathbb Z_r\eqdef \mathbb Z /r\mathbb Z$ and $\zeta_r$ be the primitive $r$-th root of unity $\zeta_r\eqdef e^{\frac{2\pi i}{r}}$. 

The main subject of this work are the wreath products $G(r,n)=C_r\wr S_n$  that we are going to describe. If $A$ is a matrix with complex entries we denote by $|A|$ the real matrix whose entries are the absolute values of the entries of $A$.
The \emph{wreath product} group $G(r,n)$ can be realized as the group of all $n\times n$ matrices satysfying the following conditions: 
\begin{itemize}
\item the non-zero entries are $r$-th roots of unity;
\item there is exactly one non-zero entry in every row and every column (i.e. $|A|$ is a permutation matrix).
\end{itemize}

%

If the non-zero entry in the $i$-th row of $g\in G(r,n)$ is $\zeta_r^{z_i}$ we let $z_i(g)\eqdef z_i\in \mathbb Z_r$, we say that $z_1(g),\ldots,z_n(g)$ are the \emph{colors} of $g$, and we let $z(g)=\sum z_i(g)$. 

We sometimes think of an element $g\in G(r,n)$ as a \emph{colored permutation}, i.e. as a map
\begin{eqnarray*}
\langle\zeta_r\rangle[n]&\rightarrow &\langle\zeta_r\rangle[n]\\
\zeta_r^k i &\mapsto &\zeta_r^{k+z_i(g)}|g|(i),
\end{eqnarray*}
where $\langle \zeta_r\rangle [n]$ is the set of numbers of the form $\zeta_r^{k}i$ for some $k\in \mathbb Z_r$ and $i\in [n]$, and $|g|\in S_n$ is the permutation defined by $|g|(i)=j$ if $g_{i,j}\neq 0$. We may observe that an element $g\in G(r,n)$ is uniquely determined by the permutation $|g|$ and by the color vector $(z_1(g),\ldots,z_n(g))$, and we will often write $g=[\sigma;z_1,\ldots,z_n]$, with $\sigma\in S_n$ and $z_i\in \mathbb Z_r$ meaning that $|g|=\sigma$ and $z_i(g)=z_i$ for all $i\in [n]$. Sometimes it can also be convenient to make use of the \emph{window notation} of $g$ and write $g=[g(1),\ldots,g(n)]$. 

In \cite[Chapter 4]{JK} we can find a description of the set $\Irr(r,n)$ consisting of all irreducible complex representations of $G(r,n)$ that we briefly recall.
Given a partition $\lambda=(\lambda_1,\ldots,\lambda_l)$ of $n$, the \emph{Ferrers diagram of shape $\lambda$} is a collection of boxes, arranged in left-justified rows, with $\lambda_i$ boxes in row $i$. 
We denote by $\Fer(r,n)$ the set of $r$-tuples $(\lambda^{(0)},\ldots,\lambda^{(r-1)})$ of Ferrers diagrams such that $\sum |\lambda^{(i)}|=n$.  If $\mu\in \Fer(r,n)$ we denote by $\St_{\mu}$ the set of all possible fillings of the boxes in $\mu$ with all the numbers from 1 to $n$ appearing once, in such way that rows are increasing from left to right and columns are incresing from top to bottom in every single Ferrers diagram of $\mu$. We also say that $\St_{\mu}$ is the set of \emph{standard multitableaux} of shape $\mu$. Moreover we let $\St(r,n)\eqdef \cup_{\mu\in \Fer(r,n)} \St_\mu$.\\
 In the following result the set $\Irr(r,n)$ is described explicitly in terms of the irreducible representations of the symmetric group. Here and in what follows we use the symbol $\odot$ for \emph{external} tensor product of representations and the symbol $\otimes$ for \emph{internal} tensor product of representations.
\begin{prop}\label{rapp di grn}
We have
$$
\Irr(r,n)=\{\rho_{\lambda^{(0)},\ldots,\lambda^{(r-1)}},\textrm{ with }(\lambda^{(0)},\ldots,\lambda^{(r-1)})\in \Fer(r,n)\},
$$
where the irreducible representation $\rho_{\lambda^{(0)},\ldots,\lambda^{(r-1)}}$ of $G(r,n)$ is given by
  $$\rho_{\lambda^{(0)},\ldots,\lambda^{(r-1)}}= \Ind_{G(r,n_0)\times\cdots\times G(r,n_{r-1})}^{G(r,n)} \left(\bigodot_{i=0}^{r-1}(\gamma_{n_i}^{\otimes i}\otimes \tilde \rho_{\lambda^{(i)}})\right),$$
where:
\begin{itemize}
        \item $n_i=|\lambda^{(i)}|$;
        \item $\tilde \rho_{\lambda^{(i)}}$ is the natural extension to $G(r,n_i)$ of the irreducible (Specht) representation $\rho_{\lambda^{(i)}}$ of $S_{n_i}$, i.e. $\tilde \rho_{\lambda^{(i)}}(g)\eqdef \rho_{\lambda^{(i)}}(|g|)$ for all $g\in G(r,n_i)$.
        \item $\gamma_{n_i}$ is the
1-dimensional
  representation of $G(r,n_i)$ given by
\begin{align*}
 \gamma_{n_i}:G(r,n_i)&\rightarrow  \mathbb{C}^*\\
 g&\mapsto \zeta_r^{z(g)}.
\end{align*}
      \end{itemize}
\end{prop}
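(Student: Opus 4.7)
The plan is to derive the proposition from the Clifford theory of semidirect products (equivalently, Mackey's method of little groups). One identifies $G(r,n)$ with $C_r^n\rtimes S_n$, where $C_r^n$ is the normal abelian subgroup of diagonal matrices with entries in $\langle\zeta_r\rangle$ and $S_n$ acts by coordinate permutation. Since $C_r^n$ is abelian, its irreducible characters are $1$-dimensional, parametrized by vectors $(z_1,\ldots,z_n)\in\mathbb Z_r^n$, and $S_n$ permutes these characters by acting on the indices.

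First I would parametrize the $S_n$-orbits on this character set: an orbit is determined by the multiset of its values, equivalently by a weak composition $(n_0,\ldots,n_{r-1})$ with $\sum n_i=n$, where $n_i$ counts the coordinates equal to $i\in\mathbb Z_r$. Choosing as representative the character $\chi$ which takes the value $i$ on the coordinates of the $i$-th block under the decomposition $[n]=[1,n_0]\sqcup[n_0+1,n_0+n_1]\sqcup\cdots$, the stabilizer of $\chi$ in $S_n$ is the Young subgroup $S_{n_0}\times\cdots\times S_{n_{r-1}}$, so the inertia subgroup of $\chi$ inside $G(r,n)$ is exactly the block-diagonal subgroup $G(r,n_0)\times\cdots\times G(r,n_{r-1})$.

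Next I would extend $\chi$ to the inertia subgroup by declaring it trivial on each symmetric factor. Unwinding the definition of $\gamma_{n_i}$ shows that on the $i$-th block $G(r,n_i)$ this extension is precisely $\gamma_{n_i}^{\otimes i}$, since an element $g\in G(r,n_i)$ is sent to $\zeta_r^{i\cdot z(g)}$. Tensoring with the natural pullback $\tilde\rho_{\lambda^{(i)}}$ of a Specht module on each factor and taking external tensor products gives a representation of the inertia subgroup whose restriction to $C_r^n$ is $\chi$-isotypic. By the standard Clifford theorem for semidirect products with abelian kernel, inducing this representation to $G(r,n)$ produces an irreducible representation, and distinct tuples $(\lambda^{(0)},\ldots,\lambda^{(r-1)})\in\Fer(r,n)$ yield inequivalent irreducibles because they correspond either to different $S_n$-orbits of $C_r^n$-characters or, within the same orbit, to inequivalent irreducible representations of the stabilizer.

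The only step with any genuine subtlety is the bookkeeping that identifies the chosen character extension on each block with the concrete formula $\gamma_{n_i}^{\otimes i}\otimes\tilde\rho_{\lambda^{(i)}}$; everything else is a direct application of the little-groups machinery. Exhaustiveness is the easy converse direction of Clifford theory, and as a sanity check one verifies that the cardinality of $\Fer(r,n)$ equals the number of conjugacy classes of $G(r,n)$, both being counted by $r$-multipartitions of $n$, confirming that no irreducible is missed.
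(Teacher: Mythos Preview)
Your argument via Clifford theory for semidirect products with abelian kernel is correct and is in fact the standard route to this result. Note, however, that the paper does not supply its own proof of this proposition: it is quoted as background and attributed to \cite[Chapter 4]{JK}, where the representations of $G(r,n)$ are constructed by precisely the little-groups method you outline. So there is nothing to compare; your sketch simply fills in what the paper leaves to the reference.
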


Recall the classical Robinson-Schensted correspondence  from \cite[\S 7.11]{Sta}). 
This correspondence has been extended to wreath product groups $G(r,n)$ in \cite{SW} in the following way. Given $g\in G(r,n)$ and  $j\in \mathbb Z_r$, we let $\{i_1,\ldots,i_h\}=\{l\in [n]: z_l(g)=j\}$, with $i_k<i_{k+1}$ for all $k\in [h-1]$, and we consider the two-line array $A_j=\left(\begin{array}{cccc}i_1&i_2&\cdots&i_h\\ \sigma(i_1)&\sigma(i_2)&\cdots&\sigma (i_h)\end{array}\right)$, where $\sigma=|g|$, and the pair of tableaux $(P_j,Q_j)$ obtained by applying the Robinson-Schensted correspondence to $A_j$. Then the correspondence 
$$
g\mapsto [P(g),Q(g)]\eqdef [(P_0,\ldots,P_{r-1}),(Q_0,\ldots,Q_{r-1})]
$$
is a bijection between $G(r,n)$ and pairs of standard multitableaux in $\St(r,n)$ of the same shape, and we call it the generalized Robinson-Schensted correspondence. We also recall that an element $g\in G(r,n)$ is an absolute involution if and only if $g\mapsto[(P_0,\ldots,P_{r-1}),(P_0,\ldots,P_{r-1})]$ for some $(P_0,\ldots,P_{r-1})\in \St(r,n)$ under the generalized Robinson-Schensted correspondence. 

 If $M$ is a complex vector space and $\rho:G\rightarrow GL(M)$ is a representation of $G$ we say that the pair $(M,\rho)$ is a \emph{Gelfand model} if  it is isomorphic as a $G$-module to the direct sum of all irreducible modules of $G$ with multiplicity one. 

A particular case of the main result in \cite{Ca2} is the explicit construction of a Gelfand model for the groups $G(r,n)$ that we are going to describe.

If $\sigma, \tau \in S_n$ with $\tau^2=1$ we let $\inv_{\tau}(\sigma)=|\{\Inv(\sigma)\cap \Pair(\tau)|$, where 
$$\Inv(\sigma)=\{\{i,j\}:(j-i)(\sigma(j)-\sigma(i))<0\}$$
and
$$
\Pair(\tau)=\{\{i,j\}:\tau(i)=j\neq i\}.
$$
If $g\in G(r,n)$ and $v\in I(r,n)$ we let $$\inv_v(g)=\inv_{|v|}(|g|),$$
and
$$
<g,v>=\sum_{i=1}^n z_i(g)z_i(v) \in \mathbb Z_r.
$$ 
\begin{thm}\label{main}Let  $M\eqdef \bigoplus_{v\in I(r,n)}\mathbb C C_v$ and $\varrho:G(r,n)\rightarrow GL(M)$ be defined by
$$
\varrho(g) (C_v) \eqdef \zeta_r^{<g,v>} (-1)^{\inv_v(g)}  C_{|g|v|g|^{-1}}.
$$
Then $(M,\varrho)$ is a Gelfand model for $G(r,n)$.
\end{thm}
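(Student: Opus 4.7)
The plan is in two stages: check that $\varrho$ is a well-defined representation via a cocycle calculation, and then show that the character of $M$ agrees with the sum of all irreducible characters of $G(r,n)$.

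For the representation check, fix $g,h\in G(r,n)$ and $v\in I(r,n)$, and compare $\varrho(gh)(C_v)$ with $\varrho(g)\varrho(h)(C_v)$. Both produce the same basis vector $C_{|gh|v|gh|^{-1}}$ since $|\cdot|:G(r,n)\to S_n$ is a group homomorphism, so only the scalars must agree. Writing $h=[\tau;z^h]$ and $g=[\sigma;z^g]$, the colored-permutation product gives $z_i(gh)=z_i^h+z^g_{\tau(i)}$, and a reindexing together with the identity $z_i(|h|v|h|^{-1})=z^v_{\tau^{-1}(i)}$ yields the additive cocycle
$$\langle gh,v\rangle \;=\; \langle h,v\rangle+\langle g,|h|v|h|^{-1}\rangle.$$
The corresponding sign cocycle $\inv_v(gh)\equiv \inv_v(h)+\inv_{|h|v|h|^{-1}}(g)\pmod 2$ is a standard computation with inversions restricted to the pairs of $|v|$, and together these give multiplicativity.

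The dimension check is immediate: the generalized Robinson-Schensted correspondence gives a bijection between $I(r,n)$ and $\St(r,n)$, so $\dim M=|\St(r,n)|=\sum_\mu \dim\rho_\mu$. It then suffices to establish the character identity
$$\chi_M(g)\;=\;\sum_{\substack{v\in I(r,n)\\ |g|v|g|^{-1}=v}} \zeta_r^{\langle g,v\rangle}(-1)^{\inv_v(g)} \;=\; \sum_{\mu\in\Fer(r,n)}\chi_{\rho_\mu}(g)$$
for every $g\in G(r,n)$. My approach is to use the induction formula of Proposition \ref{rapp di grn}: the twisting character $\gamma_{n_i}^{\otimes i}$ is tailored to absorb exactly the color phase $\zeta_r^{\langle g,v\rangle}$, while the Specht factor together with the $S_n$-centralizer condition on $v$ reduce to a Frobenius-Schur-type identity on each block $G(r,n_i)$, and ultimately to the classical statement for symmetric groups.

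The main obstacle is this last identity. Conjugacy classes in $G(r,n)$ are parametrized by $r$-tuples of partitions (colored-cycle types), and one must verify cycle by cycle that the signed, colored contributions of absolute involutions commuting with $g$ match the output of Frobenius reciprocity applied to the induction. A clean route is to reduce by a Mackey-type decomposition to the case in which $|g|$ is a single colored cycle, where both the absolute involutions commuting with $g$ and the character values $\chi_{\rho_\mu}(g)$ admit explicit combinatorial descriptions; the general case then follows by a multiplicativity argument across the cycles of $|g|$.
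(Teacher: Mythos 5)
First, a point of reference: the paper does not prove this statement at all. Theorem \ref{main} is quoted verbatim from \cite{Ca2} (``Involutory reflection groups and their models''), where it is established by a substantial argument; so there is no in-paper proof to compare against, and any complete proof here would necessarily be doing real work. Your first stage is fine: the two cocycle identities $\langle gh,v\rangle=\langle h,v\rangle+\langle g,|h|v|h|^{-1}\rangle$ and $\inv_v(gh)\equiv\inv_v(h)+\inv_{|h|v|h|^{-1}}(g)\pmod 2$ are both correct (the second follows from the fact that $\{i,j\}\in\Inv(\sigma\tau)$ iff exactly one of $\{i,j\}\in\Inv(\tau)$, $\{\tau(i),\tau(j)\}\in\Inv(\sigma)$ holds, summed over $\Pair(|v|)$, which $\tau$ maps bijectively onto $\Pair(\tau|v|\tau^{-1})$), and the dimension count via Robinson--Schensted is also correct and reduces the theorem to the character identity you display.

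The gap is that this character identity is the entire content of the theorem, and your plan for it does not survive scrutiny. The proposed reduction ``cycle by cycle'' fails at the first step: the absolute involutions $v$ with $|g|v|g|^{-1}=v$ do \emph{not} decompose as products of involutions supported on the individual cycles of $|g|$, because such a $v$ may interchange two cycles of $|g|$ of equal length (and equal color sum); correspondingly, neither the left-hand sum nor $\sum_\mu\chi_{\rho_\mu}(g)$ factors as a product over cycles (the latter is governed by a Murnaghan--Nakayama-type \emph{sum}, not a product). So the Mackey/multiplicativity step is not available as stated. Likewise, the ``Frobenius--Schur-type identity on each block'' that you invoke is not a known lemma you can cite: for $G(r,n)$ the number of absolute involutions equals $\sum_\mu\dim\rho_\mu$ (all Frobenius--Schur-type indicators being accounted for by the absolute, rather than ordinary, involutions), and turning that numerical coincidence into the signed, colored character identity is precisely what \cite{Ca2} spends its main technical effort on (there via the theory of projective reflection groups and an analysis of the model as a sum of representations induced from absolute-centralizers of involutions, in the spirit of Inglis--Richardson--Saxl and Adin--Postnikov--Roichman). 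As it stands, your stage two is a statement of intent rather than a proof, and the intended route has a concrete obstruction that would need to be addressed before it could be carried out.
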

Theorem \ref{main} motivates the following definition. We call the map $v\mapsto |g|v|g|^{-1}$ the \emph{absolute conjugation} by $g$ on $G(r,n)$. This map gives rise to an action of $G(r,n)$ on itself that we still call absolute conjugation.
\section{Some tools in the combinatorial representation theory of $B_n$}\label{BnLemmone}

As mentioned in the introduction, we will now focus our treatment on the special case $B_n=G(2,n)$. 
The main result of this section is Proposition \ref{indres} which is an extension of an idea appearing in \cite{IRS}
and  will be of crucial importance to prove Theorem \ref{mainth}.

First of all we observe that, since $B_n$ is given by real matrices, the absolute involutions in $B_n$ are exactly the involutions in $B_n$. So, to understand our results, we need to describe and parametrize the $S_n$-conjugacy classes of involutions in $B_n$ explicitly.
To this aim, for all $v\in I(2,n)$ we let
 \begin{itemize}
  \item $\fix_0(v)\eqdef|\{i:i>0 $ and $v(i)=i\}|;$
  \item $\fix_1(v)\eqdef|\{i:i>0$ and $v(i)=-i\}|;$
  \item $\pair_0(v)\eqdef|\{(i,j):0<i<j, \,v(i)=j $ and $v(j)=i\}|;$
  \item $\pair_1(v)\eqdef|\{(i,j):0<i<j, \, v(i)=-j$ and $v(j)=-i\}|.$
\end{itemize}
For example, if $v=[(3,2,1,8,9,6,7,4,5);1,0,1,0,1,1,0,0,1]$ which is equivalent, in the window notation, to $v=[-3,2,-1,8,-9,-6,7,4,-5]$, we have $\fix_0(v)=2$, $\fix_1(v)=1$, $\pair_0(v)=1$ and $\pair_1(v)=2$.
\begin{prop}\label{Sn-classes}
Two involutions $v$, $w$ of $B_n$ are $S_n$-conjugate if and only
if
$$\fix_0(v)=\fix_0(w), \quad \pair_0(v)=\pair_0(w),$$ $$\fix_1(v)=\fix_1(w), \quad \pair_1(v)=\pair_1(w).$$
Furthermore, given an involution $v$ in $B_n$, let $\Sh(v)=(\lambda,
\mu)$. Then $\lambda$ has $\fix_0(v)$ columns of odd length and $\fix_0(v) +
2\,\pair_0(v)$ boxes, while $\mu$ has $\fix_1(v)$  columns of odd length and
$\fix_1(v) + 2\,\pair_1(v)$ boxes.
\end{prop}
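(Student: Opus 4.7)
I would split the proof into two parts.

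For the classification of $S_n$-conjugacy classes, the forward direction is a direct verification. The elements of $S_n\subset B_n$ are the colored permutations with all colors equal to $0$, and conjugation by $\sigma\in S_n$ gives $|\sigma v\sigma^{-1}|=\sigma|v|\sigma^{-1}$ together with $z_i(\sigma v\sigma^{-1})=z_{\sigma^{-1}(i)}(v)$. Hence $\sigma$ sends a fixed point or a pair of $v$ of color $j$ to a fixed point or a pair of $\sigma v\sigma^{-1}$ of the same color $j$, and all four statistics are preserved. For the converse, given involutions $v,w$ with the same four invariants, I construct $\sigma\in S_n$ explicitly by choosing arbitrary bijections between the color-$0$ fixed points of $v$ and those of $w$, between their color-$1$ fixed points, between their positive pairs (and, inside each paired pair, mapping the minimum element to the minimum), and between their negative pairs; combining these bijections yields $\sigma\in S_n$ with $\sigma v\sigma^{-1}=w$.

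For the second claim I would use the Stanton-White RS correspondence recalled in \S\ref{NotPre}. Set $I_j\eqdef\{i\in[n]:z_i(v)=j\}$ for $j=0,1$. The crucial observation is that $I_0$ and $I_1$ are each stable under $|v|$: from $v^2=1$ one reads off on matrix entries that $v_{|v|(i),i}=v_{i,|v|(i)}$, so $z_{|v|(i)}(v)=z_i(v)$ for every $i$. Consequently the restriction of $|v|$ to $I_j$, call it $\tau_j$, is a genuine involution of $I_j$, and the two-line array $A_j$ appearing in the Stanton-White construction is exactly the biword of $\tau_j$. By inspection $\tau_0$ has $\fix_0(v)$ fixed points (the $i\in I_0$ with $v(i)=i$) and $\pair_0(v)$ two-cycles (the positive pairs), giving total size $\fix_0(v)+2\pair_0(v)$; and similarly $\tau_1$ has $\fix_1(v)$ fixed points and $\pair_1(v)$ two-cycles.

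Applying now the classical theorem of Sch\"utzenberger -- that for an involution of the symmetric group the RS-shape has as many odd-length columns as fixed points, with total number of boxes equal to the size of the involution -- to $\tau_0$ and $\tau_1$ yields the stated descriptions of $\lambda$ and $\mu$. The one subtle step in the whole argument is the $|v|$-stability of $I_0$ and $I_1$, since it is what allows the $B_n$ shape computation to decouple into two independent instances of the classical RS algorithm applied to symmetric-group involutions, reducing everything to a well-known fact.
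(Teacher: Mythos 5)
Your proposal is correct and follows essentially the same route as the paper: the paper dismisses the conjugacy classification as clear (conjugation by $S_n$ preserves the colors within each cycle) and derives the shape statement from Sch\"utzenberger's theorem together with the definition of the Stanton--White correspondence, which is exactly the reduction you carry out, with the $|v|$-stability of the color classes $I_0,I_1$ (implicit in the paper) made explicit.
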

\begin{proof}
The first part is clear, since conjugation of a cycle by an element in $S_n$ does not alter the number of negative entries in the cycle.
The second part follows easily from the corresponding result for the symmetric group due to Sch\"utzenberger (see \cite{Sc} or \cite[Exercise 7.28]{Sta}) and the definition of the generalized Robinson-Schensted correspondence given in \S\ref{NotPre}.
\end{proof}

We can thus name the $S_n$-conjugacy classes of the involutions of
$B_n$ in this way:
$$c_{f_{0},f_{1},p_{0},p_{1}}\eqdef\{v\in I(2,n): \fix_0(v)=f_{0};\, \fix_1(v)=f_{1};\, \pair_0(v)=p_{0};\, \pair_1(v)=p_{1}\},$$
where $f_{0},f_{1},p_{0},p_{1}\in \mathbb N$ are such that $f_{0}+f_{1}+2p_{0}+2p_{1}=n$.
The description given of the $S_n$-conjugacy classes ensures that
the subspace of $M$ generated by the involutions $v \in B_n$ with
$\fix_0(v)=\fix_1(v)=0$ - which is non trivial if $n$ is even only - is a $B_n$-submodule. The crucial step in
the proof of Theorem \ref{mainth} is the partial result regarding
this submodule.


Given $\lambda\in \Fer(n)$ we let
\begin{eqnarray*}
R^-_{\lambda}&\eqdef&\{\sigma\in \Fer(n-1):\sigma \textrm{ is obtained by deleting one box from }\lambda\}\\
R^+_{\lambda}&\eqdef&\{\sigma\in \Fer(n+1):\sigma \textrm{ is obtained by adding one box to }\lambda\}
\end{eqnarray*}
Moreover, if $(\lambda,\mu)\in \Fer(2,n)$, we let
\begin{eqnarray*}
R^-_{\lambda,\mu}&\eqdef&\{(\sigma,\mu)\in \Fer(2,n-1):\sigma\in R^-_{\lambda}\}\cup \{(\lambda,\tau)\in \Fer(2,n-1):\tau\in R^-_{\mu}\}\\ 
R^+_{\lambda,\mu}&\eqdef&\{(\sigma,\mu)\in \Fer(2,n+1):\sigma\in R^+_{\lambda}\}\cup \{(\lambda,\tau)\in \Fer(2,n+1):\tau\in R^+_{\mu}\}
\end{eqnarray*}

We always identify $B_{n}$ as a subgroup of $B_{n+1}$ by
$$B_{n}=\{g \in B_{n+1}: g(n+1)=n+1\}.$$
\begin{thm}{(Branching rule for $B_n$)}
Let $(\lambda, \mu)\in \Fer(2,n)$. 

Then the following holds:
$$\rho_{\lambda, \mu}\downarrow_{B_{n-1}}=\bigoplus_{(\sigma, \tau) \in \,
R^-_{\lambda, \mu}}  \rho_{\sigma, \tau}$$
$$\rho_{\lambda, \mu}\uparrow^{B_{n+1}}=\bigoplus_{(\sigma, \tau) \in \,
 R^+_{\lambda, \mu}} \rho_{\sigma, \tau}.$$ 
\end{thm}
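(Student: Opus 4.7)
This is the classical branching rule for $B_n=G(2,n)$, and I would prove it as follows. Observe first that the two formulas are equivalent via Frobenius reciprocity: $(\sigma,\tau)\in R^-_{\lambda,\mu}$ if and only if $(\lambda,\mu)\in R^+_{\sigma,\tau}$, so the multiplicity of $\rho_{\sigma,\tau}$ in $\rho_{\lambda,\mu}\!\downarrow_{B_{n-1}}$ equals the multiplicity of $\rho_{\lambda,\mu}$ in $\rho_{\sigma,\tau}\!\uparrow^{B_n}$. It thus suffices to establish the restriction formula.

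Setting $k=|\lambda|$, Proposition \ref{rapp di grn} gives
$$\rho_{\lambda,\mu}=\Ind_{B_k\times B_{n-k}}^{B_n}\bigl(\tilde\rho_\lambda\odot(\gamma_{n-k}\otimes\tilde\rho_\mu)\bigr).$$
I would then apply Mackey's formula to compute the restriction of this induced representation to $B_{n-1}$. Since $B_{n-1}$ is the stabilizer of $n$ in $B_n$, the double cosets $B_{n-1}\backslash B_n/(B_k\times B_{n-k})$ are represented by exactly two elements, corresponding to whether the letter $n$ sits in the first $k$ or the last $n-k$ positions of the parabolic. The two cosets contribute respectively
$$\Ind_{B_{k-1}\times B_{n-k}}^{B_{n-1}}\bigl((\tilde\rho_\lambda\!\downarrow_{B_{k-1}})\odot(\gamma_{n-k}\otimes\tilde\rho_\mu)\bigr)$$
and
$$\Ind_{B_k\times B_{n-k-1}}^{B_{n-1}}\bigl(\tilde\rho_\lambda\odot(\gamma_{n-k-1}\otimes(\tilde\rho_\mu\!\downarrow_{B_{n-k-1}}))\bigr),$$
where I have used the elementary observation that the restriction of $\gamma_{n-k}$ from $B_{n-k}$ to $B_{n-k-1}$ is $\gamma_{n-k-1}$, since fixing one additional letter (necessarily with color zero) does not change the color sum.

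Since $\tilde\rho_\lambda(g)=\rho_\lambda(|g|)$, the restriction $\tilde\rho_\lambda\!\downarrow_{B_{k-1}}$ is the natural extension to $B_{k-1}$ of the restriction $\rho_\lambda\!\downarrow_{S_{k-1}}$, which by the classical branching rule for the symmetric group splits as $\bigoplus_{\sigma\in R^-_\lambda}\tilde\rho_\sigma$; analogously for $\tilde\rho_\mu$. Substituting these decompositions into the two Mackey summands and recognizing each resulting piece as $\rho_{\sigma,\mu}$ or $\rho_{\lambda,\tau}$ via Proposition \ref{rapp di grn} yields precisely $\bigoplus_{(\sigma,\tau)\in R^-_{\lambda,\mu}}\rho_{\sigma,\tau}$. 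The main obstacle is the bookkeeping inherent to Mackey's formula, in particular the verification of the double coset structure; this is however routine for wreath products, and indeed the whole statement is standard and can be found in \cite[Chapter 4]{JK}.
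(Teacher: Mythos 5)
Your argument is correct, but note that the paper does not actually prove this statement: its ``proof'' is a one-line citation of \cite[\S 3]{hiss} (the result is classical and can also be extracted from \cite[Chapter 4]{JK}). Your Mackey-theoretic derivation from the induced-representation description in Proposition \ref{rapp di grn} is therefore a genuine, self-contained alternative, and it is sound. The Frobenius reciprocity reduction of the induction formula to the restriction formula is valid, since $(\sigma,\tau)\in R^-_{\lambda,\mu}$ iff $(\lambda,\mu)\in R^+_{\sigma,\tau}$. The double cosets $B_{n-1}\backslash B_n/(B_k\times B_{n-k})$ are indeed parametrized by whether the letter $n$ falls in the block of size $k$ or the block of size $n-k$, giving two cosets when $0<k<n$; you should note the degenerate case $k=0$ or $k=n$, where there is only one double coset and the missing summand is harmlessly empty because $R^-_{\emptyset}=\emptyset$. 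The remaining ingredients --- the identity $\gamma_{n-k}\downarrow_{B_{n-k-1}}=\gamma_{n-k-1}$, the compatibility of $\tilde\rho_\lambda$ with restriction along the projection $g\mapsto|g|$, the symmetric-group branching rule, and transitivity of induction to recognize each summand as an irreducible of $B_{n-1}$ via Proposition \ref{rapp di grn} --- are all correctly deployed. What your route buys is self-containedness and the fact that it generalizes verbatim to $G(r,n)$ (with $r$ double cosets in place of two), which is relevant since the paper also states the $G(r,n)$ branching rule in \S\ref{grn} without proof; what the paper's citation buys is only brevity.
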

\begin{proof}
 See \cite[\S3]{hiss}.
\end{proof}
Before stating the main result of this section we need some further notation. 
A diagram $(\lambda, \mu)\in \Fer(2,n)$ will be called \textit{even} if both $\lambda$ and $ \mu$ have all rows of even length.\\
 If $\phi$ and $\psi$ are representations of a group $G$, we say that $\phi$ \emph{contains} $\psi$ if $\psi$ is isomorphic to a subrepresentation of $\phi$.

\begin{prop} \label{indres}
 
 Let $\Pi_m$ be representations of $B_{2m}$, $m$ ranging in $\mathbb{N}$.
 Then the following are equivalent:
 \begin{enumerate}
         \item [a)] for every $m$, $\Pi_m$ is isomorphic to the direct sum of all the irreducible representations of
         $B_{2m}$ indexed by even diagrams of $\Fer(2,2m)$, each of such representations occurring once;
         \item [b)] for every $m$,
           \begin{enumerate}
               \item[b0)] $\Pi_0$ is 1-dimensional (and $B_0$ is the group with 1 element);
               \item[b1)] the module $\Pi_m$
               contains the irreducible representations $\rho_{\iota_{2m}, \emptyset}$ and $\rho_{\emptyset, \iota_{2m}}$ of $B_{2m}$, where $\iota_k$ denotes the single-rowed Ferrers diagram with $k$ boxes;
               \item [b2)]the following isomorphism holds:
               \begin{equation}\label{resind}
                  \Pi_m\downarrow_{B_{2m-1}}\cong\Pi_{m-1}\uparrow^{B_{2m-1}}.
               \end{equation}

           \end{enumerate}
 \end{enumerate}
\end{prop}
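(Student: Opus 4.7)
The plan is to prove the two implications separately. The forward direction (a)$\Rightarrow$(b) is a direct check via the branching rule; the reverse direction (b)$\Rightarrow$(a) is an induction on $m$, in which condition b1 furnishes the initial data needed to pin down the multiplicities left ambiguous by b2.

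For (a)$\Rightarrow$(b), conditions b0 and b1 are immediate, since the only element of $\Fer(2,0)$ is $(\emptyset,\emptyset)$ and, for every $m\ge1$, the partition $\iota_{2m}$ is a single row of even length. For b2, expanding both sides via the branching rule reduces the statement to the combinatorial claim that, for every $(\alpha,\beta)\in\Fer(2,2m-1)$, the number of even diagrams in $\Fer(2,2m)$ whose $R^-$ set contains $(\alpha,\beta)$ equals the number of even diagrams in $\Fer(2,2m-2)$ whose $R^+$ set contains $(\alpha,\beta)$. Both quantities vanish unless $(\alpha,\beta)$ has exactly one odd row among the rows of $\alpha$ and $\beta$; in the remaining case each equals $1$, the distinguished even neighbour being obtained by extending (resp.\ shortening) the unique odd row by one box, and the relevant addable/removable condition being automatic by parity.

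For (b)$\Rightarrow$(a) I induct on $m$, the base case $m=0$ being immediate from b0. Assuming $\Pi_{m-1}\cong\Lambda_{m-1}:=\bigoplus_{(\lambda,\mu)\textrm{ even}}\rho_{\lambda,\mu}$, condition b2 together with (a)$\Rightarrow$(b) applied to $\Lambda_m$ gives $\Pi_m\downarrow_{B_{2m-1}}\cong\Lambda_m\downarrow_{B_{2m-1}}$. Writing $\Pi_m=\sum m_{\sigma,\tau}\rho_{\sigma,\tau}$ with $m_{\sigma,\tau}\in\mathbb{N}$, the multiplicity-free branching rule for $B_{2m}$ converts this equality of restrictions into the linear system
$$
\sum_{(\sigma,\tau)\in R^+_{\alpha,\beta}}m_{\sigma,\tau}\;=\;\#\bigl\{(\sigma,\tau)\in R^+_{\alpha,\beta}:(\sigma,\tau)\textrm{ even}\bigr\}\in\{0,1\},
$$
indexed by $(\alpha,\beta)\in\Fer(2,2m-1)$. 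The main obstacle is to show that this system, combined with the positivity bounds $m_{\iota_{2m},\emptyset},\,m_{\emptyset,\iota_{2m}}\ge1$ from b1, forces $m_{\sigma,\tau}=1$ for even $(\sigma,\tau)$ and $m_{\sigma,\tau}=0$ otherwise. One first applies the equation at $(\iota_{2m-1},\emptyset)$, whose unique even $R^+$-member is $(\iota_{2m},\emptyset)$, to force $m_{\iota_{2m},\emptyset}=1$ and to kill the two remaining multiplicities; the symmetric argument handles $(\emptyset,\iota_{2m-1})$. Equations with right-hand side $0$ then annihilate every non-even diagram admitting in its $R^-$ set an element with at least two odd rows, which covers every non-even diagram that has a removable even row or at least three odd rows; the remaining non-even diagrams (those with all rows odd and exactly two rows in total) are killed by cascading through equations whose distinguished even member has already been determined in an earlier step, ordered for instance by the dominance order on pairs of partitions. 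Once every non-even multiplicity has been shown to vanish, each even $(\sigma,\tau)$ is forced to have $m_{\sigma,\tau}=1$ by the equation at any $(\alpha,\beta)\in R^-_{\sigma,\tau}$, since $(\sigma,\tau)$ is then the unique possibly-nonzero term in $R^+_{\alpha,\beta}$.
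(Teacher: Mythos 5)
Your proof is correct, and while the direction a)$\Rightarrow$b) coincides with the paper's argument, your treatment of b)$\Rightarrow$a) is genuinely different. The paper does not solve for multiplicities directly: it totally orders $\Fer(2,2m)$ lexicographically, proves by downward induction along this order that every even diagram occurs in $\Pi_m$ (the key device being an auxiliary even diagram $(\sigma,\tau)$ obtained by moving two boxes from the last to the first nonzero row, which yields two uniqueness statements that exclude the wrong candidates), and then concludes by a dimension count: the multiplicity-free sum $\Pi_m^{\mathrm{even}}$ embeds in $\Pi_m$ and has the same dimension because both restrict to $\Pi_{m-1}\uparrow^{B_{2m-1}}$. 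You instead write $\Pi_m=\sum m_{\sigma,\tau}\rho_{\sigma,\tau}$ and show that the linear system coming from equality of restrictions, together with nonnegativity and the two lower bounds from b1), has a unique solution. Both routes are valid; yours is more elementary and pins down every multiplicity explicitly (so it never needs the final dimension count), while the paper's avoids the delicate bookkeeping of which equations have already been resolved. The one place where your write-up is thinner than it should be is the ``cascading'' step: the residual non-even diagrams (two rows, both odd, e.g.\ $((2m-3,3),\emptyset)$) can only be killed by an equation whose distinguished even member (here $((2m-2,2),\emptyset)$) is \emph{not} among the two diagrams supplied by b1), so the two phases ``kill all non-even'' and ``force all even to be $1$'' cannot be run sequentially as your last sentences suggest; they must be interleaved along each chain $(\iota_{2m},\emptyset)\to((2m-1,1),\emptyset)\to((2m-2,2),\emptyset)\to\cdots$, alternately deducing multiplicity $1$ and multiplicity $0$. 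This interleaved induction does close (each equation in the chain involves, besides the two chain members, only diagrams already annihilated by right-hand-side-zero equations), so the gap is presentational rather than fatal, but it should be spelled out.
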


We explicitely observe that we are dealing here with even diagrams, i.e., with \textit{rows} of even length. What we will need later are  diagrams with \textit{columns} of even length. This is a harmless difference which simplifies our computations and will be solved in  \S\ref{parz}.
\begin{proof}
 a)$\Rightarrow$ b). Conditions b0) and b1) follow immediately.

 Let us now  compare $\Pi_m\downarrow_{B_{2m-1}}$ and $\Pi_{m-1}\uparrow^{B_{2m-1}}$.
 The branching rule ensures that $\Pi_m\downarrow_{B_{2m-1}}$ contains exactly the
 $\rho_{\lambda, \mu}$'s where the diagram $(\lambda,\mu)$ has exactly one row of odd length. Furthermore, the pair $(\alpha, \beta)$ such that $R_{\alpha,\beta}^-\ni (\lambda, \mu)$ is uniquely determined: to obtain it, it will only be allowed
 to add a box to the unique odd row of the diagram $(\lambda,\mu)$. This means that
 $\Pi_m\downarrow_{B_{2m-1}}$ is the multiplicity-free direct sum of all the
 representations of $B_{2m-1}$ indexed by diagrams in $\Fer(2,2m-1)$ with exactly one row of odd length.

 Arguing analogously for $\Pi_{m-1}\uparrow^{B_{2m-1}}$, we can infer that it
 contains exactly the same irreducible representations with multiplicity 1
 and it is thus isomorphic to $\Pi_m\downarrow_{B_{2m-1}}$.

 \bigskip

 b)$\Rightarrow$ a) Let us argue by induction.

 \bigskip

 The case $m=0$ is given by b0). Let's see also the case $m=1$. We know that $\Pi_1\downarrow_{B_{1}}\cong\Pi_0\uparrow^{B_{1}}\cong\rho_{\iota_1,\emptyset} \oplus \rho_{\emptyset, \iota_1}.$
 But $\Pi_1$ contains $\rho_{\iota_2,\emptyset}$ and $ \rho_{\emptyset, \iota_2}$ by b1), and the isomorphism
 $$\big(\rho_{\iota_2,\emptyset} \oplus \rho_{\emptyset, \iota_2}\big)\downarrow_{B_1}\cong\rho_{\iota_1,\emptyset} \oplus \rho_{\emptyset, \iota_1}\cong\Pi_0\uparrow^{B_{1}}$$
 ensures that $$\Pi_1\cong\rho_{\iota_2,\emptyset} \oplus \rho_{\emptyset, \iota_2}.$$

 \bigskip

 Let us show that, if $\Pi_{m-1}$ is the direct sum of all the
 representations indexed by even diagrams, the same holds for $\Pi_m$.
 For notational convenience, we let 
$$
\Lambda_m\eqdef\{(\lambda,\mu)\in \Fer(2,2m):\, \rho_{\lambda,\mu}\textrm{ is a subrepresentation of }\Pi_m\}
$$First we shall see that, if $(\lambda, \mu)\in \Fer(2,2m)$ is an even diagram, then 
 $(\lambda, \mu)\in \Lambda_m$.
 
 The set $\Fer(2,2m)$
 is totally ordered in this way:
 given two pairs $(\lambda, \mu), (\sigma, \tau)\in \Fer(2,2m)$,
  we let $(\lambda, \mu)<(\sigma, \tau)$ if one of the following holds:

 i) $\lambda<\sigma$ lexicographically;

 ii) $\lambda=\sigma$ and $\mu<\tau$ lexicographically.

 \medskip

 We observe that $(\iota_{2m}, \emptyset)$ is the maximum element of $\Fer(2,2m)$ with respect to this order.
\medskip

We claim that if $(\lambda, \mu)\in \Fer(2,2m)$ is such that:
  \begin{enumerate}
   \item[i)] $(\lambda, \mu)$ is even;
   \item[ii)] $(\lambda, \mu) \notin \{(\iota_{2m}, \emptyset),(\emptyset, \iota_{2m})\}$;
   \item[iii)] $(\sigma, \tau)\in \Lambda_m \textrm{ for all }(\sigma, \tau)\in \Fer(2,2m)\textrm{ such that }\,(\sigma, \tau)
   \mbox{ is even and }(\sigma, \tau)>(\lambda, \mu).$
   \end{enumerate}
   Then $(\lambda, \mu) \in \Lambda_m.$

 As we already know that $(\iota_{2m}, \emptyset)$ and $(\emptyset, \iota_{2m})$ are contained in $\Lambda_m$,
 once proved the claim, all the even pairs will.\\
\emph{Proof of the claim.}  Let $(\lambda, \mu)\in \Fer(2,2m)$ be an even diagram satisfying i), ii) and iii).
  Then the pair $(\lambda, \mu)$ has at least two rows. We let $(\sigma,\tau)\in \Fer(2,2m)$ be the pair obtained from $(\lambda, \mu)$ by deleting two boxes in the last non-zero row and adding two boxes to the first non-zero row. 

  As $(\sigma, \tau)>(\lambda, \mu)$, we have $(\sigma, \tau)\in \Lambda_m$, so the isomorphism
 \eqref{resind}, the induction hypothesis and the branching rule lead to the following:
  \begin{equation}\label{formalissima}
 \forall \,\,(\eta, \theta) \in R^-_{\sigma, \tau},\,\,R^+_{\eta,\theta}\cap \Lambda_m=\{(\sigma,\tau)\}.
  \end{equation}

  Now let $(\alpha,\beta)\in \Fer(2,2m-1)$ be obtained from $(\lambda,\mu)$ by deleting one box in the last nonzero row. Our induction hypothesis ensures that $\rho_{\alpha,\beta}$ is a subrepresentation of $\Pi_{m-1}\uparrow^{B_{2m-1}}$ with multiplicity one. So the isomorphism $\eqref{resind}$ implies that  
   \begin{equation}\label{formalona}
  \textrm{ there exists a unique  $(\gamma,\delta)\in \Fer(2,2m)$ such that } \{(\gamma,\delta)\}=R^+_{\alpha,\beta}\cap \Lambda_m.
  \end{equation}
The claim will be proved if we show that $(\gamma,\delta)=(\lambda,\mu)$.

 The pair $(\gamma,\delta)$ is obtained from  $(\alpha,\beta)$ by adding a single box, since $(\gamma,\delta)\in R^+_{\alpha,\beta}$. If such box is not added in the first or in the last non zero rows of $(\alpha,\beta)$ then $(\gamma,\delta)$ has two rows of odd length and one can check that $R^-_{\gamma,\delta}$ contains at least a diagram with three rows of odd length. This contradicts \eqref{resind}. 

  Now assume that $(\gamma,\delta)$ is obtained by adding a box in the first nonzero row of $(\alpha,\beta)$. If we let $(\eta,\theta)$ be the pair obtained from $(\lambda,\mu)$ by deleting two boxes in the last nonzero row and adding one box in the first nonzero row, we have $(\eta,\theta)\in R^-_{\sigma,\tau}$, and $R^+_{\eta,\theta}\cap \Lambda_m\supseteq\{(\sigma,\tau),(\gamma,\delta) \}$ which 
contradicts \eqref{formalissima}.

Therefore $(\gamma,\delta)$ is obtained by adding a box in the last nonzero row of $(\alpha,\beta)$, i.e. $(\gamma,\delta)=(\lambda,\mu)$ and the claim is proved.

 We have just proved that if we let $\Pi_m^{\mathrm{even}}$ be the multiplicity free sum of all irreducible representations of $B_{2m}$ indexed by even diagrams we have that $\Pi_m^{\mathrm{even}}$ is a subrepresentation of $\Pi_m$. The result follows since we also have 
 $$\Pi_m^{\mathrm{even}}\downarrow_{B_{2m-1}}\cong\Pi_{m-1}\uparrow^{B_{2m-1}},$$
and so, in particular, $\dim(\Pi_m^{\mathrm{even}})=\dim(\Pi_m)$.
 
\end{proof}

\section{A partial result for $B_n$}\label{parz}
In the process of proving our main results we use the following auxiliary representation of $B_n$ on $M$:
\begin{align*}
 \varphi(g): \,&M \rightarrow M\\
 & \, C_v \mapsto (-1)^{<g,v>}C_{|g|v|g|^{-1}}.
\end{align*}Notice that the representation $\varphi$ is just like the
representation $\varrho$ of the model $(M, \varrho)$, apart from the factor
$(-1)^{\inv_v(g)}$.

Let $M_m$ be the subspace of $M$ spanned by the elements $C_v$ as $v$ varies among all
involutions in $B_{2m}$ such that
$\fix_0(v)=\fix_1(v)=0$:$$M_m\eqdef\bigoplus_{p_0+p_1=m}
M(c_{0,0,p_{0},p_{1}}).$$ The main task of this section is to show that the representations $(M_m,\varphi)$ satisfy the conditions of Proposition \ref{indres}.

We first prove that the representation $(M_m,\varphi)$ satisfies condition b1) of Proposition \ref{indres}. In fact, we will show explicitly that $(M_m,\varphi)$ contains all irreducible representations indexed by a pair of 1-rowed Ferrers diagrams.\\ 
Recall from Proposition \ref{rapp di grn} that the irreducible representations of $B_n$  are parametrized by
pairs $(\lambda,\mu)\in \Fer(2,n)$, and that we have in this case

  \begin{equation} \label{rapp di Bn} \rho_{\lambda, \mu}\simeq \Ind_{B_s\times B_{n-s}}^{B_n} \left(\tilde \rho_\lambda \odot (\gamma_{n-s}
  \otimes \tilde \rho_\mu) \right),
\end{equation}
where $s=|\lambda|$.

For $S\subseteq [2m]$ let 
$$\Delta_S\eqdef \{g\in I(2,2m):\,\fix_0(g)=\fix_1(g)=0\textrm{ and } \{i\in[n]:z_i(g)=0\}=S\},$$
and
$$C_S=\sum_{v\in \Delta_S}C_v\in M.$$
\begin{lem} \label{1-rowed rapp}
For all $p_0,p_1\in \mathbb N$ such that $p_0+p_1=m$, the subspace of $M_m$ spanned by all $C_S$ with $|S|=2p_{0}$,
is an irreducible submodule of $(M_m,\varphi)$ affording the representation  $\rho_{\iota_{2p_{0}},\iota_{2p_{1}}}$.
\end{lem}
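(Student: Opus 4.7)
The plan is to show that, under $\varphi$, the vectors $\{C_S : |S|=2p_0\}$ are permuted up to signs by $B_{2m}$, so that their span is a monomial module, and then to recognise this module as the induced representation appearing in \eqref{rapp di Bn} for $(\lambda,\mu)=(\iota_{2p_0},\iota_{2p_1})$.

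First I would compute the action of $\varphi(g)$ on $C_S$ directly. Every $v\in \Delta_S$ has the same colour vector, namely the indicator function of $S^c$, so
\[
<g,v>\ =\ \sum_{i=1}^{2m} z_i(g)z_i(v)\ =\ \sum_{i\notin S} z_i(g)\pmod 2
\]
depends only on $g$ and $S$, not on $v$. Moreover, letting $\sigma=|g|$, absolute conjugation $v\mapsto \sigma v\sigma^{-1}$ sends $\Delta_S$ bijectively onto $\Delta_{\sigma(S)}$, since the colour of position $i$ in $\sigma v\sigma^{-1}$ equals $z_{\sigma^{-1}(i)}(v)$. Summing over $v\in\Delta_S$ one obtains the explicit formula
\[
\varphi(g)(C_S)\ =\ (-1)^{\sum_{i\notin S}z_i(g)}\,C_{\sigma(S)},
\]
which shows that $V_{p_0}:=\Span\{C_S:|S|=2p_0\}$ is a $B_{2m}$-submodule of $M_m$ of dimension $\binom{2m}{2p_0}$.

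Next I would identify $V_{p_0}$ as an induced module. Fix the distinguished subset $S_0=\{1,\dots,2p_0\}$; in the action of $B_{2m}$ on the size-$2p_0$ subsets of $[2m]$ via $g\mapsto|g|$, the stabiliser of $S_0$ is exactly the Young subgroup $B_{2p_0}\times B_{2p_1}$ (the first factor permuting signs inside $S_0$, the second outside). By the formula above, this stabiliser acts on $C_{S_0}$ through the one-dimensional character $\chi(h,k)=(-1)^{z(k)}$, and the remaining $C_S$ are (up to sign) obtained from $C_{S_0}$ by acting with a system of coset representatives of $B_{2p_0}\times B_{2p_1}$ in $B_{2m}$. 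This is the standard monomial realisation of the induced representation, so
\[
V_{p_0}\ \cong\ \Ind_{B_{2p_0}\times B_{2p_1}}^{B_{2m}}\bigl(\tilde\rho_{\iota_{2p_0}}\odot(\gamma_{2p_1}\otimes\tilde\rho_{\iota_{2p_1}})\bigr),
\]
because $\tilde\rho_{\iota_k}$ is the trivial representation of $B_k$. Comparing with \eqref{rapp di Bn} applied to $\lambda=\iota_{2p_0}$, $\mu=\iota_{2p_1}$ yields $V_{p_0}\cong\rho_{\iota_{2p_0},\iota_{2p_1}}$, and in particular $V_{p_0}$ is irreducible.

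I do not expect a serious obstacle here: the content of the argument lies entirely in the explicit formula for $\varphi(g)(C_S)$, after which everything reduces to the standard description of monomial induction. The only careful points are checking that $<g,v>$ is constant on $\Delta_S$ (forced because $v$ must carry colour $0$ on $S$ and colour $1$ on $S^c$) and that absolute conjugation respects the colour pattern up to the permutation $\sigma=|g|$, both being immediate from the definitions in \S\ref{NotPre}.
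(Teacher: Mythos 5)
Your proposal is correct and follows essentially the same route as the paper: both compute that $\langle g,v\rangle$ is constant on $\Delta_S$ so that $\varphi(g)$ sends $C_S$ to $\pm C_{|g|(S)}$, identify the stabiliser of $[2p_0]$ as $B_{2p_0}\times B_{2p_1}$ acting through $\tilde\rho_{\iota_{2p_0}}\odot(\gamma_{2p_1}\otimes\tilde\rho_{\iota_{2p_1}})$, and conclude by recognising the span of the $C_S$ as the induced module of \eqref{rapp di Bn}. The only cosmetic difference is that you write the global monomial formula for $\varphi(g)(C_S)$ up front, whereas the paper first restricts to the stabiliser and then induces.
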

\begin{proof}
Let us consider the
 $1$-dimensional subspace $\mathbb C C_{[2p_{0}]}$ of $M_m$.

Let us identify the subgroup $B_{2p_{0}}\times B_{2p_{1}}$ of
$B_{2m}$ with the group of the elements permuting "separately" the
first $2p_{0}$ integers and the remaining $2p_{1}$ integers:
$$B_{2p_{0}}\times B_{2p_{1}}\simeq \{g \in B_{2m} :|g|(i)\in [2p_{0}]  \,\forall \,i\in [2p_{0}]\},$$
and we let $\psi=\varphi|_{B_{2p_{0}}\times B_{2p_{1}}}$. We have
\begin{eqnarray*}
\psi(g_1,g_2)(C_{[2p_{0}]})&=&\psi(g_1,g_2)\big(\sum_{v
\in\Delta_{[2p_{0}]}}C_v\big)=\sum_{v\in\Delta_{[2p_{0}]}}\psi(g_1,g_2) (C_v)\\
&=&\sum_{v \in\Delta_{[2p_{0}]}}(-1)^{<g_2,v>}|g_1g_2|v|g_1g_2|^{-1}=\sum_{v
\in\Delta_{[2p_{0}]}}(-1)^{z(g_2)}|g_1g_2|v|g_1g_2|^{-1}\\
&=&(-1)^{z(g_2)}\sum_{v
\in\Delta_{[2p_{0}]}}|g_1g_2|v|g_1g_2|^{-1}=(-1)^{z(g_2)}C_{[2p_{0}]},
\end{eqnarray*}
since, clearly, the map $v\mapsto |g_1g_2|v|g_1g_2|^{-1}$ is a permutation of $\Delta_{[2p_{0}]}$.
Therefore, we have that $(\mathbb C C_{[2p_{0}]},\psi) $ is a representation of $B_{2p_{0}}\times B_{2p_{1}}$ and that it is isomorphic to the representation $\tilde \rho_{\iota_{2p_{0}}}\odot (\gamma_{2p_{1}} \otimes
\tilde \rho_{\iota_{2p_{1}}})$. By the description of the irreducible representations of $B_n$ given in  \eqref{rapp di Bn} we have that 
$$
\Ind_{B_{2p_{0}}\times B_{2p_{1}}}^{B_{2m}}(\mathbb C C_{[2p_{0}]},\psi)\cong \rho_{\iota_{2p_{0}},\iota_{2p_{1}}}.
$$
Now we can observe that, by construction,
$B_{2p_{0}}\times B_{2p_{1}}$ is the stabilizer in $B_{2m}$ of $v$ with respect to the absolute conjugation and that
$$\{C_S:|S|=2p_{0}\}=\{C\in M_{m}:C=\sum_{v\in \Delta_{[2p_{0}]}}C_{|g|v|g|^{-1}}\textrm{ for some }g\in B_{2m}\}.$$ From these facts we  deduce that we also have
$$\Ind_{B_{2p_{0}}\times
B_{2p_{1}}}^{B_{2m}}(\mathbb{C}\,C_{{[2p_{0}]}},\psi)=\bigoplus_{S\subseteq
[2m], |S|=2p_{0}} \mathbb{C}\,C_{S},$$
and the proof is complete.
\end{proof}
\begin{prop}\label{b1)}
For all $m>0$, we have
$$
(M_m,\varphi)\downarrow_{B_{2m-1}}\cong(M_{m-1},\varphi)\uparrow^{B_{2m-1}}.
$$
\end{prop}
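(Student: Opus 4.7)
The plan is to realize $M_m\downarrow_{B_{2m-1}}$ as an induced module matching $M_{m-1}\uparrow^{B_{2m-1}}$, via an intermediate subgroup and transitivity of induction. First, since every $g\in B_{2m-1}\subset B_{2m}$ fixes $2m$ positively, absolute conjugation satisfies $(|g|v|g|^{-1})(2m)=|g|(v(2m))$, which preserves the sign of $v(2m)$. Hence setting $M_m^{(\epsilon)}\eqdef\Span\{C_v:v(2m)\text{ has sign }(-1)^\epsilon\}$ for $\epsilon\in\{0,1\}$ gives a $B_{2m-1}$-invariant decomposition $M_m\downarrow_{B_{2m-1}}=M_m^{(0)}\oplus M_m^{(1)}$.

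Next I would introduce the intermediate subgroup $H\eqdef B_{2m-2}\times\langle s_{2m-1}\rangle\leq B_{2m-1}$, where $s_{2m-1}$ is the element of $B_{2m-1}$ whose only nontrivial entry is a $-1$ in position $2m-1$. This is the $B_{2m-1}$-stabilizer of $2m-1$ under the action $g\mapsto|g|$, of index $2m-1$. The subspace $N^{(\epsilon)}\eqdef\Span\{C_v:v(2m)=(-1)^\epsilon(2m-1)\}$ is $H$-stable, and its basis is in bijection with that of $M_{m-1}$ via $v'\mapsto v'_\epsilon$, where $v'_\epsilon$ extends $v'$ by the $\epsilon$-pair $\{2m-1,2m\}$. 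A direct calculation from the definition of $\varphi$ shows that elements of $B_{2m-2}$ act on $N^{(\epsilon)}$ exactly as on $M_{m-1}$ (their colors at $2m-1,2m$ vanish and their underlying permutation fixes those coordinates), while $\varphi(s_{2m-1})(C_{v'_\epsilon})=(-1)^{<s_{2m-1},v'_\epsilon>}C_{v'_\epsilon}=(-1)^\epsilon C_{v'_\epsilon}$ since $z_{2m-1}(v'_\epsilon)=\epsilon$. Hence $N^{(\epsilon)}\cong M_{m-1}\boxtimes\chi_\epsilon$ as $H$-modules, where $\chi_\epsilon$ is the character of $\langle s_{2m-1}\rangle$ sending the generator to $(-1)^\epsilon$.

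Taking the transpositions $t_j=(j,2m-1)\in S_{2m-1}\subset B_{2m-1}$ for $j\in[2m-1]$ as a transversal of $H$ in $B_{2m-1}$, one verifies directly that $\varphi(t_j)N^{(\epsilon)}=\Span\{C_v:v(2m)=(-1)^\epsilon j\}$, so these translates fill up $M_m^{(\epsilon)}$ with matching dimensions; this gives $M_m^{(\epsilon)}\cong\Ind_H^{B_{2m-1}}N^{(\epsilon)}$. Since $\chi_0\oplus\chi_1$ is the regular representation of $\langle s_{2m-1}\rangle$, equal to $\Ind_{\{1\}}^{\langle s_{2m-1}\rangle}\mathrm{triv}$, the compatibility of induction with external tensor products gives $M_{m-1}\boxtimes(\chi_0\oplus\chi_1)\cong\Ind_{B_{2m-2}}^H M_{m-1}$, and transitivity of induction then yields
$$M_m\downarrow_{B_{2m-1}}\cong\Ind_H^{B_{2m-1}}(N^{(0)}\oplus N^{(1)})\cong\Ind_H^{B_{2m-1}}\Ind_{B_{2m-2}}^H M_{m-1}\cong\Ind_{B_{2m-2}}^{B_{2m-1}}M_{m-1}=M_{m-1}\uparrow^{B_{2m-1}}.$$

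The main subtlety lies in choosing the right intermediate subgroup $H$ and observing that the two opposite sign actions of $s_{2m-1}$ on $N^{(0)}$ and $N^{(1)}$ combine into the regular representation of $\langle s_{2m-1}\rangle$; this is exactly what makes the inner step of induction-in-stages collapse and produces the desired $M_{m-1}\uparrow^{B_{2m-1}}$.
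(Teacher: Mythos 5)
Your proof is correct, and while it proves the same induced-module identity as the paper, it organizes the argument along a genuinely different route. The paper refines $M_m\downarrow_{B_{2m-1}}$ into the pieces $M^{\epsilon}_{p_{0},p_{1}}$ indexed by both the pair type $(p_{0},p_{1})$ and the color $\epsilon$ of $2m$, writes each piece (and each $M_{q_{0},q_{1}}\subseteq M_{m-1}$) as the induction of a linear character from the absolute-conjugation stabilizer of an explicit representative involution, and after transitivity reduces everything to the character identity $\pi^{0}_{q_{0}+1,q_{1}}\oplus\pi^{1}_{q_{0},q_{1}+1}\cong\Ind_{K_{q_{0},q_{1}}}^{H^{1}_{q_{0},q_{1}+1}}(\pi_{q_{0},q_{1}})$ across an index-$2$ extension, checked by computing induced characters. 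You instead keep $M_m$ whole, induce in one step from the much larger intermediate subgroup $H=B_{2m-2}\times\langle s_{2m-1}\rangle$ (the stabilizer of the \emph{position} $2m-1$ rather than of an involution), identify $N^{(\epsilon)}$ with the external tensor product $M_{m-1}\odot\chi_{\epsilon}$, and let the identity $\chi_{0}\oplus\chi_{1}\cong\Ind_{\{1\}}^{\langle s_{2m-1}\rangle}(\mathrm{triv})$ do the work that the paper's character computation does. The underlying mechanism is the same in both arguments --- the two possible colors of the entry paired with $2m$ combine into an induction across an index-$2$ subgroup --- but your version avoids the explicit description of the stabilizers $H^{\epsilon}_{p_{0},p_{1}}$, $K_{q_{0},q_{1}}$ and the bookkeeping over $(p_{0},p_{1})$, at the mild cost of having to verify directly the decomposition $M_m^{(\epsilon)}=\bigoplus_{j}\varphi(t_j)N^{(\epsilon)}$ that certifies $M_m^{(\epsilon)}$ as an induced module, a fact the paper gets for free from its orbit--stabilizer description of the spaces $M(c)$. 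All the verifications you invoke (stability of $N^{(\epsilon)}$ under $H$, the scalar $(-1)^{\epsilon}$ for $s_{2m-1}$, the transversal $\{t_j\}$, and the dimension count $\dim M_m^{(\epsilon)}=(2m-1)\dim M_{m-1}$) check out.
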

\begin{proof}

 For brevity, for all $p_{0},p_{1}\in \mathbb N$ such that $p_{0}+p_{1}=m$, we denote the $B_{2m}$-module $M(c_{0,0,p_{0},p_{1}})$ with $M_{p_{0},p_{1}}$.
 Via the representation $\varphi$, the vector space
$M_m$ naturally splits as a $B_{2m}$-module as it does via $\varrho$:
$$M_m=\bigoplus_{p_0+p_1=m} M_{p_{0},p_{1}}.$$ 

We consider the action of $B_{2m-1}$ on each class $c_{0,0,
p_{0},p_{1}}$ and it is clear that $z_{2m}(v)=z_{2m}(|g|v|g|^{-1})$ for all $v\in B_{2m}$ and $g\in B_{2m-1}$.
In particular, each $M_{p_{0},p_{1}}$ splits, as a $B_{2m-1}$-module, into two submodules according to the color of
$2m$. More precisely, if we denote by 
\begin{eqnarray*}M_{p_{0},p_{1}}^0&\eqdef & \Span\{C_v:v\in c_{0,0,p_{0},p_{1}}\textrm{ and } z_{2m}(v)=0\};\\
 M_{p_{0},p_{1}}^1&\eqdef & \Span\{C_v:v\in c_{0,0,p_{0},p_{1}}\textrm{ and } z_{2m}(v)=1\},
\end{eqnarray*}
we have
$$
M_{p_{0},p_{1}}=M_{p_{0},p_{1}}^0\oplus M_{p_{0},p_{1}}^1
$$
as $B_{2m-1}$-modules, and hence we also have the following decomposition of $M_m$ as a $B_{2m-1}$-module
$$M_m\downarrow_{B_{2m-1}}=\bigoplus_{p_0+p_1=m} \left(M_{p_{0},p_{1}}^0\mbox{ $ \bigoplus $ }M_{p_{0},p_{1}}^1\right).$$

Let us consider the involutions $v_{p_{0},p_{1}}^{0}$, with $p_{0}\neq 0$, and $v_{p_{0},p_{1}}^{1}$, with $p_{1}\neq 0$,  given by
\begin{align*}
v_{p_{0},p_{1}}^{0}\eqdef&[(2,1,4,3,...,2m,2m-1);\underbrace{0,0,...0}_{2(p_{0}-1)},\underbrace{1,...,1}_{2p_{1}},0,0)];\\
v_{p_{0},p_{1}}^{1}\eqdef&[(2,1,4,3,...,2m,2m-1);\underbrace{0,0,...0}_{2p_{0}},\underbrace{1,...,1}_{2p_{1}}].
\end{align*}
We observe that $M_{p_{0},p_{1}}^0$ and $M_{p_{0},p_{1}}^1$ are spanned by all the elements $C_v$ as $v$ varies in the $S_{2m-1}$-conjugacy classes of $v_{p_{0},p_{1}}^{0}$ and $v_{p_{0},p_{1}}^{1}$ respectively,
and so we can express them as induced representations of linear representations of the stabilizers of these elements with respect to the absolute conjugation in $B_{2m-1}$. Namely, if we let
$$H_{p_{0},p_{1}}^{0}\eqdef\{g \in B_{2m-1}:
|g|v_{p_{0},p_{1}}^{0}|g|^{-1}=v_{p_{0},p_{1}}^{0}\},$$ $$H_{p_{0},p_{1}}^{1}\eqdef\{g \in
B_{2m-1}: |g|v_{p_{0},p_{1}}^{1}|g|^{-1}=v_{p_{0},p_{1}}^{1}\},$$ we have then

$$(M_{p_{0},p_{1}}^0, \varphi)\cong \Ind_{H_{p_{0},p_{1}}^{0}}^{B_{2m-1}}( \pi_{p_{0},p_{1}}^{0})\,\,\textrm{ and }\,\,
(M_{p_{0},p_{1}}^1,\varphi)\cong\Ind_{H_{p_{0},p_{1}}^{1}}^{B_{2m-1}}(\pi_{p_{0},p_{1}}^{1}),$$
where
$$\begin{array}{rccl}
\pi_{p_{0},p_{1}}^{0}:&H_{p_{0},p_{1}}^{0}&\rightarrow &\mathbb C^*\\
&g&\mapsto &(-1)^{<g,v_{p_{0},p_{1}}^{0}>}
\end{array}
\,\,\textrm{ and }\,\,
\begin{array}{rccl}
\pi_{p_{0},p_{1}}^{1}:&H_{p_{0},p_{1}}^{1}&\rightarrow &\mathbb C^*\\
&g&\mapsto&(-1)^{<g,v_{p_{0},p_{1}}^{1}>}.
\end{array}
$$

Let us now turn to $M_{m-1}$: arguing as in $M_m$, we have
$$M_{m-1}=\bigoplus_{q_{0}+q_{1}=m-1} M_{q_{0},q_{1}}.$$
As above, $M_{q_{0},q_{1}}$ can be written by means of an induction
from the stabilizer of an involution in $c_{0,0,q_{0},q_{1}}$ with respect to the absolute conjugation. For every
$q_{0},q_{1}$ such that $q_{0}+q_{1}=m-1$, let us consider the vector $u_{q_{0},q_{1}}$ given
by
\begin{equation*}
u_{q_{0},q_{1}}\eqdef[(2,1,4,3,\dots,2m-2,2m-3);\underbrace{0,0,\dots,0}_{2q_{0}},\underbrace{1,\hdots,1}_{2q_1}]
\end{equation*}
 and let
$$K_{q_{0},q_{1}}\eqdef\{g \in B_{2m-2}: |g|u_{q_{0},q_{1}}|g|^{-1}=u_{q_{0},q_{1}}\}.$$ Then
$$(M_{q_{0},q_{1}}, \varphi)=\Ind_{K_{q_{0},q_{1}}}^{B_{2m-2}}(\pi_{q_{0},q_{1}}),$$
where
$$\begin{array}{rccl}
\pi_{q_{0},q_{1}}:&K_{q_{0},q_{1}}&\rightarrow &\mathbb{C}^*\\
&g&\mapsto &(-1)^{<g,u_{q_{0},q_{1}}>}.
\end{array}
$$
Summing up, observing that $M_{0,m}^{0} =M_{m,0}^{1} =\{0\}$, we have
\begin{eqnarray*}M_m\downarrow_{B_{2m-1}}&=&\bigoplus_{p_0+p_1=m}(M_{p_{0},p_{1}}^{0}\oplus M_{p_{0},p_{1}}^{1})= \bigoplus_{q_0+q_1=m-1}(M_{q_{0}+1,q_{1}}^{0}\oplus M_{q_{0},q_{1}+1}^{1})\\
&\cong& \bigoplus_{q_0+q_1=m-1}\left(\Ind_{H_{q_0+1,q_1}^0}^{B_{2m-1}}(\pi_{q_0+1,q_1}^0) \mbox{ $ \bigoplus $ } \Ind_{H_{q_{0},q_{1}+1}^{1}}^{B_{2m-1}}(\pi_{q_{0},q_1+1}^1) \right)
\end{eqnarray*}
 and
$$M_{m-1}\uparrow^{B_{2m-1}}\cong\Ind_{B_{2m-2}}^{B_{2m-1}}\Big(\bigoplus_{q_{0}+q_{1}=m-1} \Ind_{K_{q_{0},q_{1}}}^{B_{2m-2}}
({\pi_{q_{0},q_{1}}})\Big).$$ So, to prove the statement it
is enough to show that
\begin{align*}\bigoplus_{q_{0}+q_{1}=m-1} \left( \Ind_{H_{q_0+1,q_1 }^0}^{B_{2m-1}}(\pi_{q_0+1,q_1}^0)\mbox{ $ \bigoplus $ }
\Ind_{H_{q_{0},q_{1}+1}^{1}}^{B_{2m-1}}(\pi_{q_0,q_1+1}^1)\right) \\
\cong \Ind_{B_{2m-2}}^{B_{2m-1}}\Big(\bigoplus_{q_{0}+q_{1}=m-1}
\Ind_{K_{q_{0},q_{1}}}^{B_{2m-2}}(\pi_{q_{0},q_{1}})\Big).
\end{align*}
As the induction commutes with the direct sum and has the
transitivity property, the last equality is equivalent to
\begin{equation}\label{eccola}
 \bigoplus_{q_{0}+q_{1}=m-1} \big( \Ind_{H_{q_{0}+1,q_{1}}^{0}}^{B_{2m-1}}(\pi_{q_0+1,q_1}^0)\mbox{ $ \bigoplus $ }
 \Ind_{H_{q_{0},q_{1}+1}^{1}}^{B_{2m-1}}(\pi_{q_0,q_1+1}^1)\big) \cong \bigoplus_{q_{0}+q_{1}=m-1}\Ind_{K_{q_{0},q_{1}}}^{B_{2m-1}}
 (\pi_{q_{0},q_{1}}).
\end{equation}
The choice of the vectors $v_{p_{0},p_{1}}^{0}$, $v_{p_{0},p_{1}}^{1}$ and
$u_{q_{0},q_{1}}$ leads to:
\begin{align*}
H_{p_{0},p_{1}}^{0}&=\{g \in B_{2m-1}:|g| \in S_{2(p_{0}-1)}\times S_{2p_{1}}, |g|(i+1)=|g|(i)\pm 1\, \forall \,i
\mbox{ odd, } 0< i< 2m\}; \\
H_{p_{0},p_{1}}^{1}&= \{g \in B_{2m-1}:|g| \in S_{2p_{0}}\times S_{2(p_{1}-1)}, |g|(i+1)=|g|(i)\pm 1\, \forall \,i
\mbox{ odd, } 0< i< 2m\}; \\
K_{q_{0},q_{1}} &=\{g \in B_{2m-2}:|g| \in S_{2q_{0}}\times
S_{2(q_{1}-1)}, |g|(i+1)=|g|(i)\pm 1\, \forall \,i \mbox{ odd, }
0< i< 2m-2\}
\end{align*}
where, as usual $S_h\times S_k=\{\sigma\in S_{h+k}:\sigma(i)\leq h \textrm{ for all }i\leq h\}$.
We therefore make the crucial observation that $$H_{q_{0}+1,q_{1}}^{0}=H_{q_{0},q_{1}+1}^{1},$$
so that to prove \eqref{eccola} it is enough to show that
\begin{equation}\label{eccolab} \Ind_{H_{q_{0},q_{1}+1}^{1}}^{B_{2m-1}}\left(\pi_{q_{0}+1,q_{1}}^0\mbox{$\bigoplus $}\, \pi_{q_0,q_1+1}^1\right)\cong \Ind_{K_{q_{0},q_{1}}}^{B_{2m-1}}
 (\pi_{q_{0},q_{1}}).
\end{equation}

Now we also observe that $K_{q_{0},q_{1}}$ is a subgroup of $H_{q_{0},q_{1}+1}^{1}$ (of index 2), so that the right-hand
side of $\eqref{eccolab}$ becomes  $\Ind_{H_{q_{0},q_{1}+1}^{1}}^{B_{2m-1}} \left
(\Ind_{K_{q_{0},q_{1}}} ^{H_{q_{0},q_{1}+1}^{1}} (\pi_{q_{0},q_{1}})\right )$ and therefore we are left to prove that

\begin{equation}\label{eccolac}
 \pi_{q_{0}+1,q_{1}}^0\mbox{$\bigoplus $}\, \pi_{q_0,q_1+1}^1=\Ind_{K_{q_{0},q_{1}}} ^{H_{q_{0},q_{1}+1}^{1}} (\pi_{q_{0}.q_{1}}).
\end{equation}

 If we let $\chi_1$ be the character of $\pi_{q_{0}+1,q_{1}}^0\mbox{$\bigoplus $}\, \pi_{q_0,q_1+1}^1$ and $\chi_2$ be the character of $\Ind_{K_{q_{0},q_{1}}} ^{H_{q_{0},q_{1}+1}^{1}} (\pi_{q_{0},q_{1}})$ we only have to show that $\chi_1(g)=\chi_2(g)$ for all $g\in H_{q_{0},q_{1}+1}^{1}$.

We have
\begin{eqnarray*}
\chi_1(g)&=&(-1)^{<g,v_{q_{0}+1,q_{1}}^0>}+ (-1)^{<g,v_{q_{0},q_{1}+1}^{1}>}\\
 &=&(-1)^{\sum_{i=2q_{0}+1}^{2m-2}z_i(g)}+(-1)^{\sum_{i=2q_{0}+1}^{2m}z_i(g)}\\
 &=&(1+(-1)^{z_{2m-1}(g)})(-1)^{\sum_{i=2q_{0}+1}^{2m-2}z_i(g)},
\end{eqnarray*}
where we have used the fact that $z_{2m}(g)=0$, since $g\in B_{2m-1}$.

As for the character $\chi_2$, we observe that $K_{q_{0},q_{1}}$ is
the subgroup of $H_{q_{0},q_{1}+1}^{1}$ of all the elements $g$ with
 $z_{2m-1}(g)=0$. So we may take 

$$C=\{\Id_{B_{2m-1}},\,\sigma\eqdef[1,2,\ldots,2m-2,-(2m-1),2m]\},$$
as a system of coset representatives of $H_{q_{0},q_{1}+1}^{1}/K_{q_{0},q_{1}}$.
Therefore the induced character $\chi_2$ is given by
$$\chi_2(g)=\sum_{\begin{subarray}{c}
 h \in C \\
 h^{-1}g h \in K_{p_{0},p_{1}}
 \end{subarray}} \chi_{\pi_{q_0,q_1}}(h^{-1}g h).$$ Since $g(2m-1)=\pm(2m-1)$ we have that $g \notin K_{q_{0},q_{1}} \Leftrightarrow \,
\forall \, h \in C, h^{-1}g h \notin K_{q_{0},q_{1}}$, and hence
$$\chi_2(g)=0 \,\, \forall\, g \in H_{q_{0},q_{1}+1}^{1}| z_{2m-1}(g)=1,$$
which agrees with $\chi_1(g)$.\\
So we are left to compute $\chi_2(g)$, where  $g$ satisfies
$z_{2m-1}(g)=0$. In this case we have $g(2m-1)=2m-1$ which implies $\sigma^{-1} g \sigma=g$, and hence
\begin{eqnarray*}\chi_2(g)&=&(-1)^{<g,u_{q_{0},q_{1}}>}+(-1)^{<\sigma^{-1} g\sigma,u_{q_{0},q_{1}}>}\\
&=&2(-1)^{<g,u_{q_{0},q_{1}}>}\\
&=& 2(-1)^{\sum_{i=2q_{0}+1}^{2m-2}z_i(g)}.
\end{eqnarray*}

We conclude that $\chi_1(g)=\chi_2(g)$ for all $g\in H_{q_{0},q_{1}+1}^1$, so $\eqref{eccolac}$ is satisfied and the proof is complete.
\end{proof}

\begin{thm} \label{nofixedpoints}
 For all $m\in \mathbb N$, $(M_m, \varphi)$ is a $B_{2m}$-module isomorphic to the direct sum of all
the irreducible representations of $B_{2m}$ indexed by the even diagrams of $\Fer(2,2m)$,
each of such representations occurring once. 
    \end{thm}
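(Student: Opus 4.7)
The plan is to invoke Proposition \ref{indres} with $\Pi_m=(M_m,\varphi)$; once the three hypotheses (b0), (b1), (b2) are verified for this family of $B_{2m}$-representations, the desired decomposition will follow at once from the implication (b)$\Rightarrow$(a) established in that proposition.

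Condition (b0) is immediate: for $m=0$ the group $B_0$ is trivial, and its unique element (the identity) is vacuously an absolute involution with $\fix_0=\fix_1=0$, so $M_0=\mathbb{C}\,C_{\Id}$ is one-dimensional.

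For condition (b1), I would apply Lemma \ref{1-rowed rapp} twice. Taking $(p_0,p_1)=(m,0)$ produces an irreducible submodule of $(M_m,\varphi)$ affording the representation $\rho_{\iota_{2m},\emptyset}$, and taking $(p_0,p_1)=(0,m)$ produces an irreducible submodule affording $\rho_{\emptyset,\iota_{2m}}$; thus both of the one-row representations required by (b1) sit inside $(M_m,\varphi)$.

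For condition (b2), the needed restriction-induction isomorphism $(M_m,\varphi)\downarrow_{B_{2m-1}}\cong(M_{m-1},\varphi)\uparrow^{B_{2m-1}}$ is exactly the statement of Proposition \ref{b1)}. This is the genuinely hard step — the real technical heart of this section — since it requires the explicit presentations of each $M_{p_0,p_1}^0$ and $M_{p_0,p_1}^1$ as inductions of linear characters from stabilizers of canonical involutions in $B_{2m-1}$, followed by a Frobenius-reciprocity character comparison; fortunately, it has already been established. With (b0), (b1), and (b2) all in hand, Proposition \ref{indres} yields that $(M_m,\varphi)$ is isomorphic to the multiplicity-free sum of all irreducible representations of $B_{2m}$ indexed by the even diagrams of $\Fer(2,2m)$, which is the content of the theorem.
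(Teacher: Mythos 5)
Your proposal is correct and follows exactly the paper's own argument: verify conditions b0), b1) (via Lemma \ref{1-rowed rapp} specialized to $(p_0,p_1)=(m,0)$ and $(0,m)$), and b2) (which is Proposition \ref{b1)}), then apply the implication (b)$\Rightarrow$(a) of Proposition \ref{indres}. No gaps.
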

\begin{proof}
It is enough to check that the representations $(M_m, \varphi)$ satisfy the conditions b0), b1), b2) of Proposition \ref{indres}.

Condition b0) is trivial.

In order to check condition b1), we
have to find two submodules of $M_m$ which are isomorphic to the
representations indexed by $(\iota_{2m}, \emptyset)$ and
$(\emptyset, \iota_{2m})$. By Lemma \ref{1-rowed rapp}, they correspond
respectively to $$\rho_{\iota_{2m}, \emptyset}=
(\mathbb{C}\,C_{[2m]}, \varphi) \quad \mbox{ and }
\rho_{\emptyset, \iota_{2m}}=(\mathbb{C}\,C_{\emptyset},
\varphi).$$

Condition b2) is the content of Proposition \ref{b1)} and the proof is complete.
\end{proof}
We are now in a position to fully describe the irreducible decomposition of the submodules $M_{p_{0},p_{1}}$ of $M_m$ via the representation $\varphi$. 
\begin{thm} \label{nofixed points refined}We have
$$(M_{p_{0},p_{1}}, \varphi)\cong\bigoplus_{\substack{
                                                            |\lambda|=2p_{0}, |\mu|=2p_{1}\\
                                                            \lambda,
                                                            \mu \textrm{ with
                                                            no odd
                                                            rows}
                                                            }}
 \rho_{\lambda,\mu}.$$

\end{thm}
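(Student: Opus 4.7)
The plan is to realize $M_{p_0,p_1}$ as an induced representation from the Young-like subgroup $B_{2p_0}\times B_{2p_1}$ and to decompose each tensor factor via the classical symmetric group result on fixed-point-free involutions. First, for any subset $S\subseteq[2m]$ of size $2p_0$, let $U_S\subset M_{p_0,p_1}$ be the span of the $C_v$ with $\{i:z_i(v)=0\}=S$. Absolute conjugation by $B_{2m}$ permutes the subspaces $\{U_S\}$ transitively, with stabilizer of $U_{[2p_0]}$ equal to $B_{2p_0}\times B_{2p_1}$, so
\[M_{p_0,p_1}\cong\Ind_{B_{2p_0}\times B_{2p_1}}^{B_{2m}}(U_{[2p_0]}).\]
The key computation is that $\langle g,v\rangle=z(g_2)$ for $g=(g_1,g_2)\in B_{2p_0}\times B_{2p_1}$ and $v\in U_{[2p_0]}$. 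Together with the bijection between a basis of $U_{[2p_0]}$ and $c_{0,0,p_0,0}\times c_{0,0,0,p_1}$ obtained by disjoint concatenation, this yields a $(B_{2p_0}\times B_{2p_1})$-equivariant isomorphism $U_{[2p_0]}\cong N^{(0)}\odot N^{(1)}$, where $N^{(0)}$ and $N^{(1)}$ are the $\varphi$-submodules of the Gelfand models for $B_{2p_0}$ and $B_{2p_1}$ attached respectively to the classes $c_{0,0,p_0,0}$ and $c_{0,0,0,p_1}$.

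Next I would decompose each factor. On $N^{(0)}$ the sign $(-1)^{\langle g,v\rangle}$ equals $+1$ identically since $z_i(v)=0$ for all $i$, so $N^{(0)}$ is the inflation along $g\mapsto|g|$ of the permutation representation of $S_{2p_0}$ on its fixed-point-free involutions. By the classical Gelfand-pair decomposition $\Ind_{S_2\wr S_{p_0}}^{S_{2p_0}}(\mathbf{1})\cong\bigoplus_\lambda\rho_\lambda$ summed over $\lambda\vdash 2p_0$ with all rows of even length, together with the identification $\tilde\rho_\lambda=\rho_{\lambda,\emptyset}$ that follows from \eqref{rapp di Bn}, this gives $N^{(0)}\cong\bigoplus_\lambda\rho_{\lambda,\emptyset}$. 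Analogously, on $N^{(1)}$ the sign becomes $(-1)^{z(g)}=\gamma_{2p_1}(g)$, so $N^{(1)}\cong\gamma_{2p_1}\otimes\bigoplus_\mu\tilde\rho_\mu=\bigoplus_\mu\rho_{\emptyset,\mu}$ summed over $\mu\vdash 2p_1$ with rows of even length.

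The conclusion then follows by distributing induction and external tensor product over direct sums and applying \eqref{rapp di Bn} once more:
\[M_{p_0,p_1}\cong\bigoplus_{\lambda,\mu}\Ind_{B_{2p_0}\times B_{2p_1}}^{B_{2m}}\bigl(\tilde\rho_\lambda\odot(\gamma_{2p_1}\otimes\tilde\rho_\mu)\bigr)=\bigoplus_{\lambda,\mu}\rho_{\lambda,\mu},\]
summed over $\lambda\vdash 2p_0$ and $\mu\vdash 2p_1$ both with all rows of even length. The step I expect to be most delicate is the sign bookkeeping in the first paragraph: the identity $\langle g,v\rangle=z(g_2)$ is precisely what makes the character $\gamma_{2p_1}$ attach to the $\mu$-factor and not to the $\lambda$-factor, matching exactly the asymmetric form in which $\rho_{\lambda,\mu}$ is defined in \eqref{rapp di Bn}. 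Once this matching is verified, the decomposition assembles mechanically.
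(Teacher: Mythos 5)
Your proposal is correct, and its first half coincides with the paper's: the paper likewise realizes $(M_{p_0,p_1},\varphi)$ as $\Ind_{B_{2p_0}\times B_{2p_1}}^{B_{2m}}\bigl(\tilde\sigma\odot(\gamma_{2p_1}\otimes\tilde\tau)\bigr)$ via the spaces you call $U_S$ (the paper's $M_S$), and the sign computation $\langle g,v\rangle=z(g_2)$ is exactly the ``crucial'' step there too. Where you genuinely diverge is in identifying $\sigma$ and $\tau$. The paper does \emph{not} decompose them directly: it leaves them as unspecified $S_{2p_i}$-representations and pins down the answer by combining the product structure with Theorem \ref{nofixedpoints} (the statement that all of $M_m=\bigoplus_{p_0+p_1=m}M_{p_0,p_1}$ is the multiplicity-free sum over even diagrams), which in turn rests on the induction/restriction characterization of Proposition \ref{indres} together with Lemma \ref{1-rowed rapp} and Proposition \ref{b1)}. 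You instead recognize $\sigma$ and $\tau$ as (inflations of) the permutation representations of $S_{2p_i}$ on fixed-point-free involutions and invoke the classical Gelfand-pair decomposition $\Ind_{S_2\wr S_p}^{S_{2p}}(\mathbf{1})\cong\bigoplus_{\nu\vdash p}\rho_{2\nu}$; you should cite this explicitly (it is classical --- Thrall, or Macdonald/Stanley --- and is the symmetric-group prototype that Proposition \ref{indres} generalizes, not something proved in the paper). Your route is shorter and makes Theorem \ref{nofixed points refined} independent of Sections 3--4 up to that point; indeed, summing your statement over $p_0+p_1=m$ would reprove Theorem \ref{nofixedpoints}. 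What the paper's longer route buys is self-containedness in type $B$ and a template (the induction/restriction criterion) that it then reuses essentially verbatim for the general wreath products $G(r,n)$ in Section 6, where one would otherwise need the $r$-colored analogue of your classical input.
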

\begin{proof}
We start by showing that there exist representations $\sigma$  of $S_{2p_{0}}$ and  $\tau$ of  $S_{2p_{1}}$ such that 
\begin{equation}\label{sita}
(M_{p_{0},p_{1}},\varphi )\cong\Ind_{B_{2p_{0}}\times B_{2p_{1}}}^{B_{2m}}(\tilde \sigma \odot (\gamma_{2p_{1}} \otimes \tilde \tau)),\end{equation}
where $\tilde \sigma$ and $\tilde \tau$ are the natural extensions of  $\sigma$ and $\tau $ to $B_{2p_{0}}$ and to $B_{2p_{1}}$, respectively.

Recall the definition of $\Delta_S$ given before the statement of Lemma \ref{1-rowed rapp}. If we let $M_{S}\eqdef \Span\{C_v:v\in \Delta_{S}\}$, it is clear that 
$$M_{p_{0},p_{1}}=M_{[2p_{0}]}\big \uparrow_{B_{2p_{0}}\times B_{2p_{1}}}^{B_{2m}}.$$
Now, since 
\begin{eqnarray*}\Delta_{[2p_{0}]}&=&\{v\in B_{2m}: v\textrm{ is an involution in }S_{2p_{0}}\times -(S_{2p_{1}})\}\\
&=&\{v:v=v'v''\textrm{with $v'$ involution in $S_{2p_{0}}$ and $-v''$ involution in $S_{2p_{1}}$}\} ,
\end{eqnarray*}
we deduce the isomorphism of vector spaces $M_{[2p_{0}]}\cong M'\otimes M''$, where $$M'=\Span\{C_{v'}:v'\textrm{ is an involution in }S_{2p_0}\}$$ and $$M''=\Span\{C_{v''}:v''\textrm{ is an involution in }S_{2p_1}\},$$ the isomorphism being given by $C_{v'v''}\leftrightarrow C_{v'}\otimes C_{-v''}$.
If $g=(g',g'')\in B_{2p_{0}}\times B_{2p_{1}}$ and $v=v'(-v'')\in \Delta_{[2p_{0}]}$ we have
\begin{eqnarray*}
\varphi(g)C_{v'}\otimes C_{v''}&\leftrightarrow& \varphi(g)C_v\\
&=& (-1)^{<g,v>}C_{|g|v|g|^{-1}}\\
&=& (-1)^{<g'',-v''>}C_{|g'|v'|g'|^{-1}|g''|(-v'')|g''|^{-1}}\\
&\leftrightarrow& C_{|g'|v'|g'|^{-1}}\otimes (-1)^{z(g_2)}C_{|g_2|v''|g_2|^{-1}}.
\end{eqnarray*}
and Equation \eqref{sita} follows. Now the full result is a direct consequence of the irreducible decomposition of the representations $\sigma$ and $\tau$, the description of the irreducible representations given in \eqref{rapp di Bn}, and Theorem \ref{nofixedpoints}.
\end{proof}

The next goal is to describe the relationship between the irreducible decomposition of the representations $\varphi$ and $\varrho$.

Recall that $\varrho(g)(C_v)=(-1)^{\inv_v(g)}\varphi(g)(C_v)$; we will show that the factor $(-1)^{\inv_v(g)}$ simply
exchanges the roles of rows and columns of the Ferrers diagrams appearing in the irreducible decomposition of the 
$B_{2m}$-modules $(M_m, \varphi)$ and $(M_m,\varrho)$. 

\begin{lem}\label{invinv} For $p_0,p_1\in \mathbb N$ with $p_0+p_1=m$ let $u_{p_{0},p_{1}}$ and $K_{p_{0},p_{1}}$ be (as in Proposition \ref{b1)}):
\begin{align*}
u_{p_{0},p_{1}}&=[(2,1,4,3,...,2m,2m-1);\underbrace{0,0,...0}_{2p_{0}},\underbrace{1,...,1}_{2p_{1}}];\\
K_{p_{0},p_{1}}&=\{g \in B_{2m}:|g| \in S_{2p_{0}}\times
S_{2p_{1}}, |g|(i+1)=|g|(i)\pm 1\, \forall \,i \mbox{ odd, }
0< i< 2m\}.
\end{align*}
Then, for every $g\in K_{p_{0},p_{1}}$, we have
$$\inv_{u_{p_{0},p_{1}}}(g)\equiv \inv(|g|) \mod 2.$$
\end{lem}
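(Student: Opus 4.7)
The plan is to exploit the rigid pair structure of elements of $K_{p_{0},p_{1}}$. The first observation is that if $g \in K_{p_{0},p_{1}}$ then $\sigma \eqdef |g|$ permutes the pairs $\{1,2\}, \{3,4\}, \ldots, \{2m-1,2m\}$ as \emph{sets}: the condition $|g|(i+1) = |g|(i) \pm 1$ for odd $i$ means that $\{\sigma(2k-1),\sigma(2k)\}$ is always a pair of consecutive integers, and since these $m$ pairs partition $[2m]$ they form a perfect matching of the path graph on $[2m]$, which has $\{1,2\},\{3,4\},\ldots,\{2m-1,2m\}$ as its unique perfect matching. So one obtains a permutation $\bar\sigma \in S_m$ together with signs $\epsilon_k \in \{0,1\}$ determined by
$$\{\sigma(2k-1),\sigma(2k)\} = \{2\bar\sigma(k)-1,2\bar\sigma(k)\}, \qquad \epsilon_k = \begin{cases}1 & \text{if } \sigma(2k-1) > \sigma(2k),\\ 0 & \text{otherwise.}\end{cases}$$

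Next I would compute both sides of the asserted congruence in terms of the $\epsilon_k$ and $\bar\sigma$. Since $\Pair(|u_{p_{0},p_{1}}|) = \{\{1,2\},\{3,4\},\ldots,\{2m-1,2m\}\}$, by the very definition of $\inv_{u_{p_{0},p_{1}}}$ we have
$$\inv_{u_{p_{0},p_{1}}}(g) = |\{k : \sigma(2k-1) > \sigma(2k)\}| = \sum_{k=1}^{m}\epsilon_k.$$
For $\inv(\sigma)$ I would split the inversions of $\sigma$ into two classes: \emph{internal} inversions $(2k-1,2k)$ lying within a single pair, and \emph{external} inversions $(i,j)$ with $i \in \{2k-1,2k\}$ and $j \in \{2l-1,2l\}$ for indices $k < l$. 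The internal inversions contribute exactly $\sum_k \epsilon_k$.

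For the external contribution, the crucial point is that $\sigma$ maps the pair $k$ entirely into the pair $\bar\sigma(k)$, so for $k < l$ either all values in the image of pair $k$ are below all values in the image of pair $l$ (when $\bar\sigma(k) < \bar\sigma(l)$, giving $0$ inversions between these pairs) or all values are above (when $\bar\sigma(k) > \bar\sigma(l)$, giving all $2 \cdot 2 = 4$ ordered crossings as inversions). Hence the external contribution equals $4 \cdot \inv(\bar\sigma)$, which vanishes mod $2$. Combining,
$$\inv(\sigma) \equiv \sum_{k=1}^{m}\epsilon_k = \inv_{u_{p_{0},p_{1}}}(g) \pmod 2,$$
as desired.

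The only subtle point is the very first step — verifying that the window condition together with $|g| \in S_{2p_{0}} \times S_{2p_{1}}$ forces $\sigma$ to respect the pair partition of $[2m]$, rather than merely sending consecutive positions to consecutive values. Once this is established via the perfect-matching-of-a-path argument above, the remainder of the argument is a clean bookkeeping of inversions and the factor $4$ does all the work modulo $2$.
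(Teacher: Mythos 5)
Your proof is correct, but it takes a different route from the paper's. The paper never identifies the induced permutation $\bar\sigma\in S_m$ of the blocks; instead it works directly with the set $\Inv(|g|)\setminus\Pair(|u_{p_0,p_1}|)$ (your ``external'' inversions) and shows it has even cardinality by means of the fixed-point-free involution $\{i,j\}\mapsto\{|u_{p_0,p_1}|(i),|u_{p_0,p_1}|(j)\}$, checking that this map preserves the inversion property using only the relations $g(h)=g(i)\pm1$, $g(k)=g(j)\pm1$ and the distinctness of the four values. Your argument first establishes the stronger structural fact that $|g|$ permutes the blocks $\{1,2\},\{3,4\},\dots,\{2m-1,2m\}$ (via the uniqueness of the perfect matching of the path $P_{2m}$ --- and note that for this the hypothesis $|g|\in S_{2p_0}\times S_{2p_1}$ is not even needed, only the consecutivity condition), and then obtains the exact identity $\inv(|g|)=\inv_{u_{p_0,p_1}}(g)+4\inv(\bar\sigma)$, of which the lemma is the reduction mod $2$. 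What your approach buys is this sharper exact formula and a cleaner global picture of $K_{p_0,p_1}$ as (essentially) a wreath-product-type subgroup; what the paper's local pairing buys is brevity and the fact that it generalizes with no change to the stabilizers arising for $G(r,n)$ in Section 6. Both proofs are complete and correct.
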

\begin{proof}
We can clearly assume that $g=|g|$. Let $\{i,j\}$ be in   $\Inv(g)$, but not in $\Pair(|u_{p_{0},p_{1}}|)$. As $u_{p_{0},p_{1}}$
is an involution satisfying $\fix_0(u_{p_{0},p_{1}})=\fix_1(u_{p_{0},p_{1}})=0$, there exist unique $h$ and $k$ such that $\{i,h\}$ and $\{j,k\}$
belong to $\Pair(|u_{p_{0},p_{1}}|)$. We will show that $\{h,k\}$ - which does not belong to
 $\Pair(|u_{p_{0},p_{1}}|)$ - is an element of $\Inv(g)$. In this way, every pair $\{i,j\} \in \Inv(g)\setminus \Pair(|u_{p_{0},p_{1}}|)$
 can be associated to exactly another, so $|\Inv(g)\setminus \Pair(|u_{p_{0},p_{1}|})|$
 is even and we get the result.

We can assume that $i<j$ (hence $g(i)>g(j)$) throughout.
Observe that we know from the form of $u_{p_{0},p_{1}}$ that $i=h\pm 1$, and $j=k\pm 1$, depending on the parity of $i$ and $j$.
Nevertheless, in all cases, we always obtain $h<k$ (since the four integers $i,j,h,k$ are distinct), so that the claim to prove is always $g(h)>g(k)$. But the definition of $K_{p_{0},p_{1}}$ ensures that $g(h)=g(i)\pm1$ and $g(k)=g(j)\pm1$. The result follows since  
$g(i)>g(j)$, and the fact that the four integers $g(i),g(j), g(h),g(k)$ are distinct.
\end{proof}
We recall the following general result in representation theory.
Let $G$ be a finite group, $H<G$. Let $\vartheta$, $\tau$ be
representations respectively of $G$ and of $H$. We have
\begin{equation}\label{theta}
(\vartheta\downarrow_H \otimes \,\,\tau)\uparrow^G\cong
\vartheta\otimes \,\,(\tau\uparrow^G).
\end{equation}
Let us denote by $\sigma_n$ the linear representation of $B_{n}$ given by $\sigma_n(g)=(-1)^{\inv(|g|)}$.  
\begin{lem}\label{altconj}
 For all $(\lambda,\mu)\in \Fer(2,n)$ we have

$$\sigma_{n}(g)\otimes\,\,\rho_{\lambda, \mu}=\rho_{\lambda',\mu'},$$
where $\lambda'$ and $\mu'$  denote the conjugate partitions of $\lambda$ and $\mu$ respectively.
\end{lem}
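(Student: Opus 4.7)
The plan is to combine the induced description of $\rho_{\lambda,\mu}$ given in \eqref{rapp di Bn} with the classical symmetric group fact that tensoring with the sign representation conjugates the indexing partition, i.e.\ $\mathrm{sgn}\otimes \rho_\lambda=\rho_{\lambda'}$, together with the tensor-induction identity \eqref{theta}.

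More precisely, I would apply \eqref{theta} with $G=B_n$, $H=B_s\times B_{n-s}$ (where $s=|\lambda|$), $\vartheta=\sigma_n$, and $\tau=\tilde\rho_\lambda\odot(\gamma_{n-s}\otimes \tilde\rho_\mu)$, obtaining
$$\sigma_n\otimes\rho_{\lambda,\mu}\cong\Ind_{B_s\times B_{n-s}}^{B_n}\Bigl(\sigma_n\big\downarrow_{B_s\times B_{n-s}}\otimes\bigl(\tilde\rho_\lambda\odot(\gamma_{n-s}\otimes\tilde\rho_\mu)\bigr)\Bigr).$$
The first key step is the elementary observation that for $g=(g',g'')\in B_s\times B_{n-s}$ one has $\inv(|g|)=\inv(|g'|)+\inv(|g''|)$, whence $\sigma_n\big\downarrow_{B_s\times B_{n-s}}\cong \sigma_s\odot\sigma_{n-s}$. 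Substituting this and distributing the internal tensor product over the external one yields
$$\sigma_n\otimes\rho_{\lambda,\mu}\cong \Ind_{B_s\times B_{n-s}}^{B_n}\Bigl((\sigma_s\otimes\tilde\rho_\lambda)\odot\bigl(\gamma_{n-s}\otimes\sigma_{n-s}\otimes\tilde\rho_\mu\bigr)\Bigr).$$

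The second key step is to note that, since $\sigma_k(g)=\mathrm{sgn}(|g|)$ depends only on $|g|$, we have $\sigma_k\otimes\tilde\rho_\nu=\widetilde{\mathrm{sgn}\otimes\rho_\nu}=\tilde\rho_{\nu'}$ for any partition $\nu$ of $k$. Applying this to both factors gives $\sigma_s\otimes\tilde\rho_\lambda\cong\tilde\rho_{\lambda'}$ and $\sigma_{n-s}\otimes\tilde\rho_\mu\cong\tilde\rho_{\mu'}$; since $|\lambda'|=s$ and $|\mu'|=n-s$, the parabolic subgroup does not change, and \eqref{rapp di Bn} identifies the induced representation as $\rho_{\lambda',\mu'}$.

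The whole argument is essentially formal once one trusts the symmetric group fact $\mathrm{sgn}\otimes\rho_\lambda=\rho_{\lambda'}$; the only thing one must verify with care is the compatibility of $\sigma_n$ with the external tensor product along the Young-type subgroup $B_s\times B_{n-s}$, which in turn reduces to the additivity of $\inv$ on such a parabolic. I do not expect any genuine obstacle here.
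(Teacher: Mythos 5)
Your proposal is correct and follows essentially the same route as the paper: apply the identity \eqref{theta} to the induced description \eqref{rapp di Bn}, note that $\sigma_n$ restricts to $\sigma_s\odot\sigma_{n-s}$ on the parabolic $B_s\times B_{n-s}$ (additivity of $\inv$ across the blocks), and invoke the symmetric group fact $\epsilon\otimes\rho_\nu=\rho_{\nu'}$ on each factor. The only difference is that you spell out the restriction step $\sigma_n\downarrow_{B_s\times B_{n-s}}\cong\sigma_s\odot\sigma_{n-s}$ slightly more explicitly than the paper does.
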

\begin{proof}
 We recall the following well-known analogous fact for the symmetric group. We have
\begin{equation}\label{conj}
 \epsilon \otimes \rho_{\lambda}=\rho_{\lambda'},
\end{equation} 
where $\epsilon(g)\eqdef(-1)^{\inv(g)}$ denotes the alternating representation. If we let $k=|\lambda|$ then, by Equations \eqref{theta} and  \eqref{conj}, we have
\begin{align*}
\sigma_{n}\otimes \,\,\,\rho_{\lambda, \mu}&=\sigma_{n}\otimes \,\Ind_{B_{k}\times B_{n-k}}^{B_{n}}( \tilde \rho_{\lambda} \odot (\gamma_{n-k} \otimes \, \tilde \rho_{\mu}))\\
&\cong \Ind_{B_{k}\times B_{n-k}}^{B_{n}}\big(\sigma_{n}\downarrow_{B_{k}\times B_{n-k}}\otimes \, \,(\tilde \rho_\lambda \odot (\gamma_{n-k} \otimes \, \tilde \rho_\mu))\big)\\
 &=\Ind_{B_{k}\times B_{n-k}}^{B_{n}}\big((\sigma_{n}\downarrow_{B_{k}}\otimes \, \tilde \rho_\lambda)\odot (\sigma_{n}\downarrow_{B_{n-k}}\otimes \, \gamma_{n-k}\otimes \, \tilde \rho_\mu) \big)\\
&=\Ind_{B_{k}\times B_{n-k}}^{B_{n}}\big(\widetilde{(\epsilon \otimes  \rho_\lambda)}\odot (\gamma_{n-k}\otimes \widetilde{(\epsilon \otimes \, \rho_\mu)}) \big)\\
&=\Ind_{B_{k}\times B_{n-k}}^{B_{n}}\big(\tilde \rho_{\lambda'}\odot (\gamma_{n-k}\otimes \, \tilde\rho_{\mu'}) \big)\\
&=\rho_{\lambda',\mu'},
\end{align*}
 and the proof is complete.
\end{proof}

\begin{thm} \label{nofixed points refined 2}We have
$$(M_{p_{0},p_{1}},  \varrho)\cong\bigoplus_{\substack{
                                                            |\lambda|=2p_{0}, |\mu|=2p_{1}\\
                                                            \lambda,
                                                            \mu \textrm{ with
                                                            no odd
                                                            columns}
                                                            }}
 \rho_{\lambda,\mu}.$$

\end{thm}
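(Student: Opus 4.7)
The plan is to reduce everything to Theorem \ref{nofixed points refined} (which handles $\varphi$) by showing that the two modules differ only by a global twist by the linear character $\sigma_{2m}$, and then to invoke Lemma \ref{altconj} so that the twist converts ``no odd rows'' into ``no odd columns''. The critical bridge is Lemma \ref{invinv}, which compares the two inversion statistics precisely on the stabilizer subgroup where it is needed.

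In detail, I would first realize both $(M_{p_0,p_1},\varrho)$ and $(M_{p_0,p_1},\varphi)$ as induced representations from the same subgroup. The set $c_{0,0,p_{0},p_{1}}$ is a single orbit under absolute conjugation with representative $u_{p_0,p_1}$, and its stabilizer in $B_{2m}$ is $K_{p_0,p_1}$ (defined exactly as in Lemma \ref{invinv}). By the usual orbit-stabilizer description of twisted permutation modules, this gives
$$(M_{p_0,p_1},\varphi) \cong \Ind_{K_{p_0,p_1}}^{B_{2m}}(\pi_{p_0,p_1}), \qquad (M_{p_0,p_1},\varrho) \cong \Ind_{K_{p_0,p_1}}^{B_{2m}}(\chi_{p_0,p_1}),$$
where $\pi_{p_0,p_1}(g) = (-1)^{<g,u_{p_0,p_1}>}$ is the character already used in the proof of Proposition \ref{b1)}, and $\chi_{p_0,p_1}(g) = (-1)^{<g,u_{p_0,p_1}>}(-1)^{\inv_{u_{p_0,p_1}}(g)}$ is its analogue for $\varrho$. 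The fact that $\chi_{p_0,p_1}$ is genuinely a homomorphism into $\mathbb{C}^*$ is automatic because it arises from restricting the representation $\varrho$ to the stabilizer.

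Now Lemma \ref{invinv} yields $(-1)^{\inv_{u_{p_0,p_1}}(g)} = (-1)^{\inv(|g|)} = \sigma_{2m}(g)$ for every $g \in K_{p_0,p_1}$, so $\chi_{p_0,p_1} = \pi_{p_0,p_1}\cdot\bigl(\sigma_{2m}\!\downarrow_{K_{p_0,p_1}}\bigr)$. Applying the general formula \eqref{theta} with $\vartheta = \sigma_{2m}$ and $\tau = \pi_{p_0,p_1}$ gives
$$(M_{p_0,p_1},\varrho) \cong \Ind_{K_{p_0,p_1}}^{B_{2m}}\bigl(\sigma_{2m}\!\downarrow_{K_{p_0,p_1}} \otimes\, \pi_{p_0,p_1}\bigr) \cong \sigma_{2m}\otimes (M_{p_0,p_1},\varphi).$$

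The conclusion is then a direct combination of Theorem \ref{nofixed points refined} and Lemma \ref{altconj}:
$$(M_{p_0,p_1},\varrho) \cong \sigma_{2m}\otimes \bigoplus_{\substack{|\lambda|=2p_0,\,|\mu|=2p_1 \\ \lambda,\mu \text{ no odd rows}}} \rho_{\lambda,\mu} \cong \bigoplus_{\substack{|\lambda|=2p_0,\,|\mu|=2p_1 \\ \lambda,\mu \text{ no odd rows}}} \rho_{\lambda',\mu'},$$
and conjugation of partitions gives a bijection between pairs with no odd rows and pairs with no odd columns, yielding exactly the desired sum. The only place where genuine work is needed is the identification $\chi_{p_0,p_1} = \pi_{p_0,p_1}\cdot\sigma_{2m}$ on $K_{p_0,p_1}$, which is supplied by Lemma \ref{invinv}; after that, the argument is a purely formal manipulation of induced characters.
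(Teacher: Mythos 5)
Your proof is correct and follows essentially the same route as the paper: both realize $(M_{p_0,p_1},\varrho)$ as a representation induced from a linear character of the stabilizer $K_{p_0,p_1}$, use Lemma \ref{invinv} to identify the extra factor with $\sigma_{2m}$ restricted to $K_{p_0,p_1}$, pull the twist out of the induction via \eqref{theta}, and conclude with Theorem \ref{nofixed points refined} and Lemma \ref{altconj}.
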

\begin{proof} Let us consider the linear representation of $K_{p_{0},p_{1}}$
$$(-1)^{\inv_{u_{p_{0},p_{1}}}(g)}\pi_{p_{0},p_{1}}(g)=(-1)^{\inv_{u_{p_{0},p_{1}}}(g)}(-1)^{<g,u_{p_{0},p_{1}}>}.$$

We have
\begin{align*}
(M_{p_{0},p_{1}},\varrho)&=((-1)^{\inv_{u_{p_{0},p_{1}}}(g)}\pi_{p_{0},p_{1}})\big \uparrow_{K_{p_{0},p_{1}}}^{B_{2m}}
=((-1)^{\inv(|g|)}\pi_{p_{0},p_{1}})\big \uparrow_{K_{p_{0},p_{1}}}^{B_{2m}}\\
&=\left((-1)^{\inv(|g|)}\big \downarrow_{K_{p_{0},p_{1}}}\otimes\,\,\pi_{p_{0},p_{1}})\right)\big \uparrow_{K_{p_{0},p_{1}}}^{B_{2m}}\\
& \cong(-1)^{\inv(|g|)}\otimes\,\,(\pi_{p_{0},p_{1}}\uparrow^{B_{2m}})=
(-1)^{\inv(|g|)}\otimes\,\,(M_{p_{0},p_{1}},\varphi),
\end{align*}
where we have used Lemma \ref{invinv} in the first line and Equation \eqref{theta} in the last line of the previous equalities.
Now the result follows from Lemma \ref{altconj} and Theorem \ref{nofixed points refined}.
\end{proof}

\section {$B_n$: the proof of the full result}\label{full}

In this section we will give a complete proof in the case of $B_n$ of Theorem \ref{mainth} that, by Proposition \ref{Sn-classes}, can be restated in the following slightly different but equivalent form.
\begin{thm}\label{mainB}
For all $f_{0}, f_{1}, p_{0},p_{1}\in \mathbb N$ such that $f_{0} +f_{1}+2p_{0}+2p_{1}=n$ we have
$$
(M(c_{f_{0}, f_{1}, p_{0},p_{1}}),\varrho)\cong \bigoplus_{\substack{|\lambda|=2p_{0}+f_{0}, |\mu|=2p_{1}+f_{1}\\ \lambda  \textrm{ with exactly $f_{0}$ odd columns}\\  \mu  \textrm{ with exactly $f_{1}$ odd columns}}} \rho_{\lambda,\mu}.
$$

\end{thm}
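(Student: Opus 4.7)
The plan is to realise $M(c_{f_0, f_1, p_0, p_1})$ as an induced representation from $B_k \times B_{n-k}$, where $k = f_0 + 2p_0$ and $n-k = f_1 + 2p_1$, whose inducing datum splits as the external product of a ``positive'' factor (essentially the $S_k$-submodule of involutions with $f_0$ fixed points and $p_0$ pairs) and a $\gamma_{n-k}$-twist of the analogous $S_{n-k}$-submodule on the ``negative'' side. Once this factorisation is in place, the classical symmetric-group result of \cite{IRS,APR1} together with the description of the irreducibles of $B_n$ given in Proposition \ref{rapp di grn} deliver the decomposition.

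First, I would partition the basis of $M(c_{f_0, f_1, p_0, p_1})$ according to the ``colour set'' $S(v) := \{i \in [n] : z_i(v) = 0\}$, which has cardinality $k$. Writing $M_S := \Span\{C_v : S(v) = S\}$, the subgroup $B_k \times B_{n-k}$ (identified as the $B_n$-stabiliser of $[k]$) preserves $M_{[k]}$, and the $B_n$-orbit of $M_{[k]}$ fills out the whole module, so $M(c_{f_0, f_1, p_0, p_1}) \cong \mathrm{Ind}_{B_k \times B_{n-k}}^{B_n}(M_{[k]})$. Each $v \in \Delta_{[k]} := \{v : S(v) = [k]\}$ factorises uniquely as $v = v_+ v_-$, with $v_+$ an involution of $S_k$ (with $f_0$ fixed points and $p_0$ pairs) supported on $[k]$, and $v_-$ an involution of $B_{n-k}$ supported on $[k+1, n]$ with every colour equal to $1$ ($f_1$ negative fixed points, $p_1$ negative pairs); this gives a vector-space identification $M_{[k]} \cong M_+ \otimes M_-$.

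The central calculation is that the $B_k \times B_{n-k}$-action factors through this tensor product. For $g = (g_1, g_2)$, the fact that $z_i(v) = 0$ on $[k]$ and $z_i(v) = 1$ on $[k+1, n]$ gives $\langle g, v\rangle = z(g_2)$; and since every pair of $|v|$ lies inside one of the two blocks (which $|g|$ preserves), one has $\inv_v(g) = \inv_{v_+}(g_1) + \inv_{|v_-|}(g_2)$. Hence
\[
\varrho(g_1, g_2)(C_{v_+} \otimes C_{v_-}) = \bigl[\pi_+(g_1)\,C_{v_+}\bigr] \otimes \bigl[\gamma_{n-k}(g_2)\,\pi_-(g_2)\,C_{v_-}\bigr],
\]
where $\pi_+$ is the natural extension to $B_k$ of the $S_k$-action $\sigma\cdot C_{v_+} = (-1)^{\inv_{v_+}(\sigma)} C_{\sigma v_+ \sigma^{-1}}$ on $M_+$, and $\pi_-$ is the analogous extension to $B_{n-k}$ of the corresponding $S_{n-k}$-action on $M_-$ (via the bijection $v_- \leftrightarrow |v_-|$). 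In particular $(M_{[k]}, \varrho) \cong \pi_+ \odot (\gamma_{n-k} \otimes \pi_-)$ as $B_k \times B_{n-k}$-modules.

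By the classical symmetric-group result \cite{IRS,APR1}, $\pi_+|_{S_k}$ is the submodule of the $S_k$-Gelfand model indexed by involutions with $f_0$ fixed points, so $\pi_+ \cong \bigoplus \tilde\rho_\lambda$ summed over $\lambda \vdash k$ with exactly $f_0$ odd columns; analogously $\pi_- \cong \bigoplus \tilde\rho_\mu$ over $\mu \vdash n-k$ with exactly $f_1$ odd columns. Combining with Proposition \ref{rapp di grn} for $r = 2$ yields
\[
M(c_{f_0, f_1, p_0, p_1}) \cong \bigoplus_{\lambda, \mu} \mathrm{Ind}_{B_k \times B_{n-k}}^{B_n}\bigl(\tilde\rho_\lambda \odot (\gamma_{n-k} \otimes \tilde\rho_\mu)\bigr) = \bigoplus_{\lambda, \mu} \rho_{\lambda, \mu},
\]
as claimed. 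The main obstacle is the bookkeeping in the middle step: one must verify that the scalar $(-1)^{\langle g, v\rangle}(-1)^{\inv_v(g)}$ genuinely splits along the factorisation $v = v_+ v_-$ (which relies crucially on $|v|$ preserving the block decomposition $[k] \sqcup [k+1, n]$), and that the $\gamma_{n-k}$-twist materialises on precisely the negative factor needed to match the formula for $\rho_{\lambda, \mu}$ in Proposition \ref{rapp di grn}.
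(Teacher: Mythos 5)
Your argument is correct, but it takes a genuinely different route from the paper's. The paper first isolates the no-fixed-point submodule $M(c_{0,0,p_0,p_1})$, identifies its decomposition for the auxiliary action $\varphi$ via an induction/restriction characterization (Proposition \ref{indres}, whose verification in Proposition \ref{b1)} is the technical heart of the argument), transfers from $\varphi$ to $\varrho$ by showing the $(-1)^{\inv_v(g)}$ twist conjugates the diagrams (Lemmas \ref{invinv} and \ref{altconj}), and only then reinstates the fixed points by inducing from $B_{2m}\times B_{n-2m}$ and applying the type-$B$ Pieri rule. You instead split the class by colour, writing $M(c_{f_0,f_1,p_0,p_1})\cong\Ind_{B_k\times B_{n-k}}^{B_n}(M_{[k]})$ with $k=f_0+2p_0$, factor $M_{[k]}\cong \pi_+\odot(\gamma_{n-k}\otimes\pi_-)$ --- your verifications that $\langle g,v\rangle=z(g_2)$ and that $\inv_v(g)$ splits additively because every element of $\Pair(|v|)$ lies in a single block are exactly the points that need checking, and they do hold --- and then quote the symmetric-group decomposition of \cite{IRS,APR1} for each factor. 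This handles fixed points and pairs in one stroke and entirely bypasses Proposition \ref{indres}, the $\varphi$-to-$\varrho$ comparison, and the Pieri rule. What the trade-off buys: your reduction is much shorter and makes transparent that the type-$B$ (and, with $r$ colour blocks, the $G(r,n)$) statement is a formal consequence of the type-$A$ one via Proposition \ref{rapp di grn}; the paper's longer route is self-contained (it does not assume the Inglis--Richardson--Saxl decomposition, and in effect re-derives it), and its induction/restriction machinery is what the authors reuse for the harder quotient groups $G(r,p,n)$ in their follow-up work, where a clean colour-block splitting is no longer available. Note also that the paper does use your colour-splitting idea, but only inside the no-fixed-point module (the sets $\Delta_S$ in \S\ref{parz}), where it serves to set up Equation \eqref{sita} rather than to import the symmetric-group result.
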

\begin{proof}
Let $m=p_{0}+p_{1}$ and consider the space $M(c_{0,0,p_{0},p_{1}})$: it is a $B_{2m}$-module via
the representation
$$\Pi_{p_{0},p_{1}}\eqdef(M(c_{0,0,p_{0},p_{1}}), \varrho)=\Ind_{K_{p_{0},p_{1}}}^{B_{2m}}(\tau_{p_{0},p_{1}}),$$
where $\tau_{p_{0},p_{1}}$ is the linear $K_{p_{0},p_{1}}$ representation given by
$\tau_{p_{0},p_{1}}(g)=(-1)^{\inv(|g|)}\pi_{p_{0},p_{1}}(g)$.
From Theorem \ref{nofixed points refined 2}, we know that it is the
multiplicity-free direct sum of all representations indexed by
pairs of diagrams $(\lambda,\mu)$ where $\lambda$ and $\mu$ have
even columns only, and $|\lambda|= 2p_{0}, |\mu|=2p_{1}$. \\
We will first show that
\begin{equation} \label{pappappero}
(M(c_{f_{0}, f_{1}, p_{0}, p_{1}}),\varrho)= \Ind_{B_{2m}\times
B_{n-2m}}^{B^n}(\Pi_{p_{0},p_{1}}\odot \rho_{\iota_{f_{0}},\iota_{f_{1}}}).
\end{equation}
Let us argue with the same strategy as in \S\ref{parz}. We define the involution $u$ representing the $S_n$-conjugacy class $c_{f_{0}, f_{1}, p_{0}, p_{1}}$ as follows:
$$u=[(2,1,4,3,\ldots,2m, 2m-1, 2m+1, \ldots, n); \underbrace{0, \ldots, 0,}_{2p_{0}}
 \underbrace{1, \ldots, 1}_{2p_{1}}, \underbrace{0, \ldots, 0}_{f_{0}},\underbrace{ 1, \ldots, 1}_{f_{1}}].$$
We have that the stabilizer of $u$ with respect to the absolute conjugation is $\{g\in B_n :\,|g|u|g|^{-1}=u\}=K_{p_{0},p_{1}} \times B_{f_{0}} \times B_{f_{1}}$, and we can easily check that
$$(M(c_{f_{0}, f_{1}, p_{0}, p_{1}}), \varrho )=
\Ind_{K_{p_{0},p_{1}} \times B_{f_{0}} \times B_{f_{1}}}^{B_n} \big(
\tau_{p_{0},p_{1}}\odot \rho_{\iota_{f_{0}},\emptyset}\odot \rho_{\emptyset, \iota_{f_{1}}}\big).$$
We recall the following identity of induced representations: if $H<G$ and $H'<G'$ we have
\begin{equation}\label{inducedproducts}
\Ind_{H\times H'}^{G\times G'}(\rho\odot \rho')=\Ind_H^G(\rho)\odot \Ind_{H'}^{G'}(\rho'), 
\end{equation}
where $\rho$ is a representation of $H$ and $\rho'$ a representation of $H'$.
 So we have
\begin{eqnarray*}
 (M(c_{f_{0}, f_{1}, p_{0}, p_{1}}), \varrho )&=&\Ind_{K_{p_{0},p_{1}} \times B_{f_{0}} \times B_{f_{1}}}^{B_n} \big(
\tau_{p_{0},p_{1}}\odot \rho_{\iota_{f_{0}},\emptyset}\odot \rho_{\emptyset, \iota_{f_{1}}}\big)\\
&=&\Ind_{B_{2m}\times B_{n-2m}}^{B^n}\big( \Ind_{K_{p_{0},p_{1}} \times B_{f_{0}} \times
B_{f_{1}}}^{B_{2m}\times B_{n-2m}} (\tau_{p_{0},p_{1}}\odot \rho_{\iota_{f_{0}},\emptyset}\odot \rho_{\emptyset, \iota_{f_{1}}}\big)\\
&=&\Ind_{B_{2m}\times B_{n-2m}}^{B^n}\big( \Ind_{K_{p_{0},p_{1}}}^{B_{2m}}(\tau_{p_{0},p_{1}})\odot \Ind_{B_{f} \times B_{f_{1}}}^{B_{n-2m}}(\rho_{\iota_{f_{0}},\emptyset}\odot \rho_{\emptyset, \iota_{f_{1}}})\big)\\
&=&\Ind_{B_{2m}\times B_{n-2m}}^{B^n}(\Pi_{p_{0},p_{1}}\odot \rho_{\iota_{f_{0}},\iota_{f_{1}}})
\end{eqnarray*} and Equation
 \eqref{pappappero} is achieved.
Now the result follows from Theorem \ref{nofixed points refined 2} and the following result which is the analogue in type $B$ of the well-known Pieri rule (see \cite[Lemma 6.1.3]{GP}).
\end{proof}
\begin{prop}
Let $\rho_{\lambda,\mu}$ be any irreducible representation of $B_m$. Then
$$
\Ind_{B_m\times B_n}^{B_{n+m}}(\rho_{\lambda,\mu}\odot \rho_{\iota_f,\iota_{n-f}})=\bigoplus \rho_{\nu, \xi},
$$
where the direct sum runs through all $(\nu,\xi)\in \Fer(2,n+m)$ such that $\nu$ is obtained from $\lambda$ by adding $f$ boxes to its Ferrers diagram, no two in the same column, and $\xi$ is obtained from $\mu$ by adding $n-f$ boxes to its Ferrers diagram, no two in the same column.
 \end{prop}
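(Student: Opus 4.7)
The plan is to reduce this Pieri-type statement for $B_n$ to the classical Pieri rule for the symmetric group by unfolding both tensor factors as inductions from smaller wreath products and then regrouping the $\lambda$-factor with the $\iota_f$-factor and the $\mu$-factor with the $\iota_{n-f}$-factor. Setting $k=|\lambda|$ and $l=|\mu|$, formula \eqref{rapp di Bn} together with \eqref{inducedproducts} realizes $\rho_{\lambda,\mu}\odot\rho_{\iota_f,\iota_{n-f}}$ as induced from the four-block subgroup $B_k\times B_l\times B_f\times B_{n-f}$ of $B_m\times B_n$. Conjugating by a suitable permutation in $S_{m+n}\subset B_{m+n}$ swaps the two middle blocks so that this subgroup sits inside $B_{k+f}\times B_{l+n-f}\subset B_{m+n}$ in the standard way. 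Using transitivity of induction, the factorization $\gamma_l\odot\gamma_{n-f}=\gamma_{l+n-f}\downarrow_{B_l\times B_{n-f}}$, and the identity \eqref{theta} to pull the one-dimensional character $\gamma_{l+n-f}$ outside the inner induction, the desired representation reduces to
$$\Ind_{B_{k+f}\times B_{l+n-f}}^{B_{m+n}}\bigl(A_1\odot(\gamma_{l+n-f}\otimes A_2)\bigr),$$
where $A_1\eqdef\Ind_{B_k\times B_f}^{B_{k+f}}(\tilde\rho_\lambda\odot\tilde\rho_{\iota_f})$ and $A_2\eqdef\Ind_{B_l\times B_{n-f}}^{B_{l+n-f}}(\tilde\rho_\mu\odot\tilde\rho_{\iota_{n-f}})$.

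The key auxiliary claim is that $A_1$ and $A_2$ are the natural extensions to wreath products of the corresponding $S_n$-inductions. A short character calculation suffices: because $\tilde\rho_\lambda\odot\tilde\rho_{\iota_f}$ factors through the projection $B_k\times B_f\to S_k\times S_f$, the induced character on $B_{k+f}$ depends only on $|g|$, and the cancellation $|C_2^{k+f}|/(|C_2^k|\cdot|C_2^f|)=1$ reduces the coset sum defining $\chi^{A_1}(g)$ to the coset sum defining $\chi^{\Ind_{S_k\times S_f}^{S_{k+f}}(\rho_\lambda\odot\rho_{\iota_f})}(|g|)$. The classical Pieri rule for $S_n$ then yields $A_1\cong\bigoplus_\nu\tilde\rho_\nu$, where $\nu$ ranges over partitions obtained from $\lambda$ by adding $f$ boxes with no two in the same column, and analogously $A_2\cong\bigoplus_\xi\tilde\rho_\xi$ with $\xi$ obtained from $\mu$ by adding $n-f$ boxes with no two in the same column.

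Substituting these decompositions and applying \eqref{rapp di Bn} once more, the whole expression becomes
$$\bigoplus_{\nu,\xi}\Ind_{B_{k+f}\times B_{l+n-f}}^{B_{m+n}}\bigl(\tilde\rho_\nu\odot(\gamma_{l+n-f}\otimes\tilde\rho_\xi)\bigr)=\bigoplus_{\nu,\xi}\rho_{\nu,\xi},$$
which is precisely the claimed decomposition. I do not expect any real obstacle here: the only mildly subtle point is the identification of \emph{induction of an extension} with the \emph{extension of an induction}, which is routine character bookkeeping. All remaining ingredients are transitivity of induction, the one-dimensional tensor/induction identity \eqref{theta}, and the classical Pieri rule for the symmetric group.
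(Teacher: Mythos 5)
Your argument is correct. Note, however, that the paper does not prove this proposition at all: it simply cites it as the type $B$ analogue of the Pieri rule, referring to Geck--Pfeiffer \cite[Lemma 6.1.3]{GP}. Your proposal therefore supplies a self-contained derivation where the paper outsources the result, and the derivation is sound: unfolding both factors via \eqref{rapp di Bn} and \eqref{inducedproducts}, rearranging the four blocks by conjugation inside $B_{m+n}$, inducing in stages, and pulling $\gamma_{l+n-f}$ out of the inner induction with \eqref{theta} does reduce everything to the two inner inductions $A_1$ and $A_2$. The one step that genuinely needs justification --- that inducing an inflated representation from $B_k\times B_f$ to $B_{k+f}$ yields the inflation of the corresponding symmetric-group induction --- is handled correctly: since $B_k\times B_f$ is the full preimage of $S_k\times S_f$ under $g\mapsto|g|$ and the indices $[B_{k+f}:B_k\times B_f]=[S_{k+f}:S_k\times S_f]$ coincide, a set of coset representatives may be chosen inside $S_{k+f}$ and the induced character formula collapses to the symmetric-group one evaluated at $|g|$. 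The classical Pieri rule (with $\rho_{\iota_f}$ the trivial representation of $S_f$) then gives the horizontal-strip conditions on $\nu$ and $\xi$, and one final application of \eqref{rapp di Bn} identifies each summand as $\rho_{\nu,\xi}$, since $|\nu|+|\xi|=m+n$. What your approach buys is independence from the external reference, at the cost of a page of bookkeeping; it is essentially the standard proof one would extract from the wreath-product Littlewood--Richardson machinery, specialized to one-rowed shapes.
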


\begin{exa} For every $f_{0},f_{1}\in [0,n]$, with $f_{0}+f_{1}=n$ let us consider the set
$\Sh(c_{f_{0}, f_{1},0,0})$. Since $\iota_k$ is the only $k$-boxed diagram with $k$ odd columns,
$\Sh(c_{f_{0}, f_{1},0,0})$ contains the pair $(\iota_{f_{0}}, \iota_{f_{1}})$ only. Thus we can explicitly find in $(M, \varrho)$
the subspace $V_{\iota_{f_{0}}, \iota_{f_{1}}}$ affording the representation $\rho_{\iota_{f_{0}}, \iota_{f_{1}}}$:
thanks to Theorem \ref{mainth},
\begin{align*}
V_{\iota_{f_{0}}, \iota_{f_{1}}}=M(c_{f_{0},f_{1},0,0})=\Ind_{B_{f_{0}}\times B_{f_{1}}}^{B_n}(\mathbb{C}\, C_{u_{f_{0},f_{1},0,0}}).
\end{align*}
$u_{f_{0},f_{1},0,0}$ being the involution $$u_{f_{0},f_{1},0,0}=[1,2,\ldots,f_{0},-(f_{0}+1),\ldots,-n].$$
In other words 
$$V_{\iota_{f_{0}}, \iota_{f_{1}}}=\Span\{C_v:v\in B_n,\,|v|=\Id,\,\#\{i:z(i)=0\}=f_{0},\,\#\{i:z(i)=1\}=f_{1}\}.$$
\end{exa}

\begin{exa}
Let $v=[-6,4,3,2,-5,-1]=[(6,4,3,2,5,1);1,0,0,0,1,1]\in B_6$.  Then $f_{0}=f_{1}=p_{0}=p_{1}=1$ and the
$S_n$-conjugacy class $c$ of $v$ has 180 elements. Then the
$B_n$-module $M(c)$ is given by the sum of the irreducible
representations indexed by $(\lambda,\mu)\in \Fer(2,n)$ such that
both $\lambda$ and $\mu$ are partitions of 3 and have exactly one
column of odd length. In particular
$$
M(c)\cong \rho_{\left(\begin{picture}(24,9)
\put(0,8){\line(1,0){10}} \put(0,3){\line(1,0){10}}
\put(0,-2){\line(1,0){5}} \put(0,-2){\line(0,1){10}}
\put(5,-2){\line(0,1){10}} \put(10,3){\line(0,1){5}} \put(12,0){,}

\put(17,8){\line(1,0){5}} \put(17,3){\line(1,0){5}}
\put(17,-2){\line(1,0){5}} \put(17,-7){\line(1,0){5}}
\put(17,-7){\line(0,1){15}} \put(22,-7){\line(0,1){15}}
\end{picture}\right)}
\oplus \rho_{\left(\begin{picture}(24,9)
\put(13,8){\line(1,0){10}} \put(13,3){\line(1,0){10}}
\put(13,-2){\line(1,0){5}} \put(13,-2){\line(0,1){10}}
\put(18,-2){\line(0,1){10}} \put(23,3){\line(0,1){5}} \put(8,0){,}

\put(0,8){\line(1,0){5}} \put(0,3){\line(1,0){5}}
\put(0,-2){\line(1,0){5}} \put(0,-7){\line(1,0){5}}
\put(0,-7){\line(0,1){15}} \put(5,-7){\line(0,1){15}}
\end{picture}\right)}
\oplus \rho_{\left(\begin{picture}(29,9) \put(0,8){\line(1,0){10}}
\put(0,3){\line(1,0){10}} \put(0,-2){\line(1,0){5}}
\put(0,-2){\line(0,1){10}} \put(5,-2){\line(0,1){10}}
\put(10,3){\line(0,1){5}} \put(12,0){,}

\put(17,8){\line(1,0){10}} \put(17,3){\line(1,0){10}}
\put(17,-2){\line(1,0){5}} \put(17,-2){\line(0,1){10}}
\put(22,-2){\line(0,1){10}} \put(27,3){\line(0,1){5}}
\end{picture}\right)} \oplus
\rho_{\left(\begin{picture}(19,9) \put(0,8){\line(1,0){5}}
\put(0,3){\line(1,0){5}} \put(0,-2){\line(1,0){5}}
\put(0,-7){\line(1,0){5}} \put(0,-7){\line(0,1){15}}
\put(5,-7){\line(0,1){15}} \put(7,0){,}

\put(12,8){\line(1,0){5}} \put(12,3){\line(1,0){5}}
\put(12,-2){\line(1,0){5}} \put(12,-7){\line(1,0){5}}
\put(12,-7){\line(0,1){15}} \put(17,-7){\line(0,1){15}}
\end{picture}\right)}.
$$
\end{exa}

\section{The general case of wreath products}\label{grn}

In this section we will treat the general case $G=G(r,n)$. To prove Theorem \ref{mainth}, we will be handling the same tools already used in the case of $B_n$. Nevertheless, as some of the results need to be slightly generalized, we will provide an outline of the whole argument in this wider setting.

Let $M$ be the model for $G(r,n)$ described in Theorem \ref{main}. Let $\varphi$ be the representation defined analogously to the case of $B_n$:
\begin{align*}
 \varphi(g): &M \rightarrow M\\
 & \, C_v \mapsto (-1)^{<g,v>}C_{|g|v|g|^{-1}}.
\end{align*} 
The $S_n$-conjugacy classes of absolute involutions of $G(r,n)$ are indexed by $2r$-plets $(f_0,\ldots, f_{r-1},p_0,\ldots,p_{r-1})$ satisfying $f_0+\cdots+f_{r-1}+2(p_0+\cdots+p_{r-1})=n$. These are given by
$$
c_{f_0,\ldots, f_{r-1},p_0,\ldots,p_{r-1}}=\{v\in I(r,n):\fix_{i}(v)=f_i\textrm{ and }\pair_{i}(v)=p_i\, \forall i\in [0,r-1]\}.
$$ 
where
\begin{eqnarray*}
\fix_{i}(v)&=&|\{j\in[n]:v(j)=\zeta_r^ij\}|\\
\pair_i(v)&=&|\{(h,k):1\leq h<k \leq n,\,v(h)=\zeta_r^ik \textrm{ and } v(k)=\zeta_r^i h\}|.
\end{eqnarray*}
The main idea is, again, focusing on the submodule with no fixed points first. 
Our half-way result is
\begin{thm} \label{nofixedpointsG(r,n)}
Let $M_{m,r}$ be the subspace of $M$ spanned by the elements $C_v$ as $v$ varies among all
involutions in $G(r,2m)$ such that
$\fix_0(v)=\fix_1(v)=\ldots=\fix_{r-1}(v)=0$:$$M_{m,r}\eqdef\bigoplus_{p_0+ \cdots+p_{r-1}=m}
M(c_{0,\ldots,0,p_0, \ldots,p_{r-1}}).$$ Then $(M_{m,r}, \varphi)$ is a $G(r,2m)$-module isomorphic to the direct sum of all
the irreducible representations of $G(r,2m)$ indexed by the diagrams of $\Fer(r,2m)$
whose rows have an even number of boxes,
each of such representations occurring once.
    \end{thm}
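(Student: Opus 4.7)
The plan is to mirror the approach used for $B_n$ in Sections \ref{BnLemmone}--\ref{parz}, beginning with a generalization of Proposition \ref{indres}. I would establish that a family of representations $\Pi_m$ of $G(r,2m)$, for $m\in\mathbb{N}$, is the multiplicity-free sum of all irreducibles indexed by $r$-tuples of Ferrers diagrams with all rows of even length if and only if (b0) $\Pi_0$ is one-dimensional; (b1) $\Pi_m$ contains each of the $r$ representations $\rho_{\emptyset,\ldots,\emptyset,\iota_{2m},\emptyset,\ldots,\emptyset}$ with $\iota_{2m}$ in the $i$-th slot, for $i=0,\ldots,r-1$; and (b2) $\Pi_m\downarrow_{G(r,2m-1)}\cong \Pi_{m-1}\uparrow^{G(r,2m-1)}$. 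The direction a) $\Rightarrow$ b) follows from the branching rule for $G(r,n)$ (the natural extension of the type-$B$ one: restricting $\rho_{\lambda^{(0)},\ldots,\lambda^{(r-1)}}$ to $G(r,n-1)$ is the multiplicity-free sum of the irreducibles obtained by removing a single box from any one component), since both sides of (b2) then equal the multiplicity-free sum of all irreducibles indexed by $r$-tuples with exactly one row of odd length. The direction b) $\Rightarrow$ a) proceeds by induction on $m$ using a total order on $\Fer(r,2m)$, e.g.\ iterated lexicographic comparison on $(\lambda^{(0)},\ldots,\lambda^{(r-1)})$; the inductive step selects, for each even $r$-tuple outside the $r$ extremal ones, a strictly larger even $r$-tuple obtained by moving two boxes from the last nonempty row to an earlier one, and then copies verbatim the uniqueness argument of Proposition \ref{indres}.

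Conditions (b0) and (b1) for $(M_{m,r},\varphi)$ are then checked directly. Condition (b0) is immediate since $I(r,0)$ is a singleton. For (b1), I would generalize Lemma \ref{1-rowed rapp}: for every ordered partition $(S_0,\ldots,S_{r-1})$ of $[2m]$ with $|S_j|=2p_j$, let $C_{S_0,\ldots,S_{r-1}}$ denote the sum of all $C_v$ with $\fix_j(v)=0$ for every $j$ and $\{k:z_k(v)=j\}=S_j$. A direct computation shows that the subspace spanned by these vectors, as the $S_j$ vary with sizes fixed, is isomorphic to $\Ind_{G(r,2p_0)\times\cdots\times G(r,2p_{r-1})}^{G(r,2m)}\bigl(\bigodot_{j=0}^{r-1}(\gamma_{2p_j}^{\otimes j}\otimes \tilde\rho_{\iota_{2p_j}})\bigr)$, since $G(r,2p_0)\times\cdots\times G(r,2p_{r-1})$ is the stabilizer of the relevant set under absolute conjugation and the restriction of $\varphi$ to the line $\mathbb{C}\,C_{[2p_0],\ldots}$ matches the tensor-product character (the computation is identical to that of Lemma \ref{1-rowed rapp}, with $(-1)$ replaced by $\zeta_r$). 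By Proposition \ref{rapp di grn} the induced representation is $\rho_{\iota_{2p_0},\ldots,\iota_{2p_{r-1}}}$; specializing $p_i=m$, $p_j=0$ for $j\neq i$ produces the $r$ required submodules.

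The technical heart and main obstacle is condition (b2). Following the template of Proposition \ref{b1)}, I would split $M_{m,r}\downarrow_{G(r,2m-1)}$ according to the color $k\in \mathbb{Z}_r$ of the entry $2m$, obtaining pieces $M^k_{p_0,\ldots,p_{r-1}}$ for each class with $p_k\geq 1$. Choosing natural representatives $v^k_{p_0,\ldots,p_{r-1}}$ (with the final pair $(2m-1,2m)$ colored $k$), each piece is induced from a linear character $\pi^k_{p_0,\ldots,p_{r-1}}$ of the stabilizer $H^k_{p_0,\ldots,p_{r-1}}$ in $G(r,2m-1)$. The crucial observation is that, after reindexing $p_j=q_j+\delta_{j,k}$, all $r$ stabilizers $H^0_{q_0+1,q_1,\ldots},\ldots,H^{r-1}_{q_0,\ldots,q_{r-1}+1}$ coincide with a single group $H_{q_0,\ldots,q_{r-1}}$ that contains $K_{q_0,\ldots,q_{r-1}}$ as a subgroup of index $r$, the index being detected by the color $z_{2m-1}$. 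Condition (b2) then reduces to the identity of $H$-characters $\bigoplus_{k=0}^{r-1}\pi^k\cong \Ind_{K}^{H}(\pi)$, and a direct computation using the coset representatives $\{\Id,\sigma,\ldots,\sigma^{r-1}\}$, where $\sigma$ shifts the color of position $2m-1$ by one, shows both characters agree via the elementary identity asserting that $\sum_{k=0}^{r-1}\zeta_r^{k\,z_{2m-1}(g)}$ equals $r$ when $z_{2m-1}(g)=0$ and $0$ otherwise, which generalizes the type-$B$ computation with $1+(-1)^{z_{2m-1}(g)}$.
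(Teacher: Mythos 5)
Your proposal is correct and follows essentially the same route as the paper: the same $G(r,n)$-analogue of Proposition \ref{indres} with conditions b0)--b2), the same generalization of Lemma \ref{1-rowed rapp} via the vectors $C_{S_0,\ldots,S_{r-1}}$ for b1), and the same verification of b2) by splitting according to the color of $2m$, observing that the $r$ stabilizers coincide and contain $K_{q_0,\ldots,q_{r-1}}$ with index $r$, and matching characters via $\sum_{k=0}^{r-1}\zeta_r^{k\,z_{2m-1}(g)}$. No substantive differences from the paper's argument.
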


We state here the $G(r,n)$-generalized version of Proposition \ref{indres}, which will be applied to $M_{m,r}$.
\begin{prop} \label{grnindres}
 Let $\Pi_m^r$ be representations of $G(r,2m)$, $m$ ranging in $\mathbb{N}$.
 Then the following are equivalent:
 \begin{enumerate}
         \item [a)] for every $m$, $\Pi_m^r$ is the direct sum of all the irreducible representations of
         $G(r,2m)$ indexed by $r$-plets of even diagrams, each of such representations occurring once;
         \item [b)] for every $m$,
           \begin{enumerate}
\item[b0)] $\Pi_0^r$ is unidimensional;
 \item[b1)] the module $\Pi_m^r$
               contains the irreducible representations of $G(r,2m)$ indexed by the $r$
               $r$-plets of diagrams $(\emptyset, \ldots, \emptyset, \iota_{2m}, \emptyset, \ldots, \emptyset)$.
               \item [b2)]the following isomorphism holds:
               \begin{equation}\label{grnresind}
                  \Pi_m^r\downarrow_{G(r,2m-1)}\cong \Pi_{m-1}^r\uparrow^{G(r,2m-1)};
               \end{equation}
              
           \end{enumerate}
 \end{enumerate}
\end{prop}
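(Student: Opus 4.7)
My plan is to mimic the structure of the proof of Proposition \ref{indres} for $B_n$, with the wrinkles required by working with $r$-tuples of diagrams instead of pairs. The essential prerequisite is the branching rule for $G(r,n)$: if $(\lambda^{(0)},\ldots,\lambda^{(r-1)}) \in \Fer(r,n)$, the restriction of $\rho_{\lambda^{(0)},\ldots,\lambda^{(r-1)}}$ to $G(r,n-1)$ (embedded as the stabilizer of $n$) is the multiplicity-free direct sum of the $\rho_{\mu^{(0)},\ldots,\mu^{(r-1)}}$ as $(\mu^{(0)},\ldots,\mu^{(r-1)})$ ranges over the $r$-tuples obtained by deleting a single box from exactly one of the $\lambda^{(i)}$, and dually for induction. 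This follows from Frobenius reciprocity together with the classical branching rule for $S_n$, via the inductive presentation of $\rho_{\lambda^{(0)},\ldots,\lambda^{(r-1)}}$ supplied by Proposition \ref{rapp di grn}. I will write $R^-_{\lambda^{(0)},\ldots,\lambda^{(r-1)}}$ and $R^+_{\lambda^{(0)},\ldots,\lambda^{(r-1)}}$ for the two resulting sets of $r$-tuples.

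The direction (a)$\Rightarrow$(b) is immediate: b0) is trivial, b1) holds since each $(\emptyset,\ldots,\iota_{2m},\ldots,\emptyset)$ is an even tuple, and b2) is a direct consequence of the branching rule, as removing a box from an even tuple of size $2m$ (respectively adding a box to an even tuple of size $2m-2$) produces exactly the $r$-tuples in $\Fer(r,2m-1)$ with precisely one row of odd length, each with multiplicity one on both sides.

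For (b)$\Rightarrow$(a) I argue by induction on $m$, the cases $m=0,1$ being handled exactly as in Proposition \ref{indres}. For the inductive step I equip $\Fer(r,2m)$ with the lexicographic order on $r$-tuples (comparing $\lambda^{(0)}$ first, then $\lambda^{(1)}$, etc.) and set $\Lambda_m^r \eqdef \{\mu\in\Fer(r,2m) : \rho_\mu \textrm{ is a subrepresentation of } \Pi_m^r\}$. By b1) the $r$ tuples $(\emptyset,\ldots,\iota_{2m},\ldots,\emptyset)$ lie in $\Lambda_m^r$, and the goal is to show the same for every other even tuple $\lambda = (\lambda^{(0)},\ldots,\lambda^{(r-1)})$, assuming inductively that every strictly larger even tuple already lies in $\Lambda_m^r$. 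I produce a strictly larger even $\sigma$ by distinguishing two cases: if some $\lambda^{(i)}$ has at least two rows, I move two boxes within $\lambda^{(i)}$ from its last to its first row (intra-diagram move); otherwise each $\lambda^{(i)}$ is a single even-length row and at least two of them are nonempty, so I pick $i<j$ with $\lambda^{(i)},\lambda^{(j)}\neq\emptyset$ and move two boxes from $\lambda^{(j)}$ into $\lambda^{(i)}$ (inter-diagram move). In either case $\sigma$ is even with $\sigma > \lambda$, and the inductive hypothesis gives $\sigma \in \Lambda_m^r$. The analogue of \eqref{formalissima}, proven by the same argument as in Proposition \ref{indres} but now invoking \eqref{grnresind}, yields that for every $\eta \in R^-_\sigma$ one has $R^+_\eta \cap \Lambda_m^r = \{\sigma\}$. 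Forming $\alpha \in \Fer(r,2m-1)$ by deleting one box from $\lambda$ at the appropriate row, and applying \eqref{grnresind} once more, one obtains a unique $\gamma \in R^+_\alpha \cap \Lambda_m^r$. A case analysis on the location of the added box shows $\gamma = \lambda$: either the alternative $\gamma$ has two or more odd rows and $R^-_\gamma$ accordingly contains a tuple with three odd rows (which cannot lie in $\Pi_{m-1}^r \uparrow^{G(r,2m-1)}$, forcing $\gamma \notin \Lambda_m^r$), or else such a $\gamma$ together with $\sigma$ would lie in a common $R^+_\eta$ for some $\eta \in R^-_\sigma$, contradicting the uniqueness just established. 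A final dimension comparison via b2) upgrades the resulting inclusion to equality.

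The principal obstacle I anticipate is the inter-diagram scenario, specifically the subcase where only two positions $i,j$ are nonempty and every diagram is single-rowed: here a bad $\gamma$ obtained by adding a box in position $i$ need not expose a three-odd-row tuple inside $R^-_\gamma$, so the ``uniqueness from \eqref{formalissima}'' route has to be used instead. This requires choosing the correct $\eta \in R^-_\sigma$ (namely, the one deleting the box brought over from $\lambda^{(j)}$ in the construction of $\sigma$) and verifying that both $\gamma$ and $\sigma$ lie in $R^+_\eta$. This is a direct, if bookkeeping-heavy, generalization of the $B_n$ analysis, and is where essentially all of the new work in passing from $r=2$ to arbitrary $r$ resides.
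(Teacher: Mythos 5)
Your proposal is correct and follows essentially the same route as the paper, which itself proves Proposition \ref{grnindres} by stating the generalized branching rule for $G(r,n)$ and observing that the argument of Proposition \ref{indres} carries over unchanged: the same lexicographic induction, the same analogue of \eqref{formalissima}, and the same three-odd-rows exclusion. The ``inter-diagram'' move you single out as the new difficulty is already present in the $B_n$ proof (the paper's $\sigma$ is formed by moving boxes from the last to the first nonzero row of the whole pair, which may lie in different component diagrams), so your case analysis is a correct, if slightly reorganized, version of the paper's.
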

Here is the generalization of the branching rule for $G(r,n)$, which is an essential ingredient for the proof of Proposition \ref{grnindres}. The rest of the proof does not present any other significant change.


\begin{thm}
Let $(\lambda^{(0)},\ldots,\lambda^{(r-1)})\in \Fer(r,n)$.
Then the following holds:
$$\rho_{\lambda^{(0)},\ldots,\lambda^{(r-1)}}\downarrow_{G(r,n-1)}=\bigoplus_{(\mu^{(0)},\ldots,\mu^{(r-1)}) \in \,
R^-_{\lambda^{(0)},\ldots,\lambda^{(r-1)}}}  \rho_{\mu^{(0)},\ldots,\mu^{(r-1)}};$$
$$\rho_{\lambda^{(0)},\ldots,\lambda^{(r-1)}}\uparrow^{G(r,n+1)}=\bigoplus_{(\mu^{(0)},\ldots,\mu^{(r-1)}) \in \,
 R^+_{\lambda^{(0)},\ldots,\lambda^{(r-1)}}} \rho_{\mu^{(0)},\ldots,\mu^{(r-1)}},$$
where we denote by $R^+_{\lambda^{(0)},\ldots,\lambda^{(r-1)}}$ the set of diagrams in $\Fer(r,n+1)$ obtained by adding one box to the diagram $(\lambda^{(0)},\ldots,\lambda^{(r-1)})$, and similarly for $R^-_{\lambda^{(0)},\ldots,\lambda^{(r-1)}}$.
\end{thm}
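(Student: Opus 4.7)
The plan is to prove the induction formula directly from the explicit presentation of the irreducibles given in Proposition \ref{rapp di grn}, and then to deduce the restriction formula by Frobenius reciprocity. Write $\rho_{\lambda^{(0)},\ldots,\lambda^{(r-1)}} = \Ind_P^{G(r,n)}(U)$, where $P = G(r,n_0)\times\cdots\times G(r,n_{r-1})$ and $U = \bigodot_{j=0}^{r-1}(\gamma_{n_j}^{\otimes j}\otimes \tilde\rho_{\lambda^{(j)}})$, so by transitivity of induction $\rho_{\lambda^{(0)},\ldots,\lambda^{(r-1)}}\uparrow^{G(r,n+1)} = \Ind_P^{G(r,n+1)}(U)$. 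I would factor this through the intermediate subgroup $P\times G(r,1)\subset G(r,n+1)$, where the extra $G(r,1)=C_r$ factor sits on the $(n+1)$-th coordinate. Since the regular representation of $C_r$ decomposes as $\bigoplus_{i=0}^{r-1}\gamma_1^{\otimes i}$, the first step yields $\Ind_P^{P\times G(r,1)}(U)=\bigoplus_{i=0}^{r-1} U\odot \gamma_1^{\otimes i}$, splitting the full induction into $r$ pieces labelled by $i\in[0,r-1]$.

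For each fixed $i$ I would group the extra $G(r,1)$ factor together with the $i$-th factor $G(r,n_i)$ of $P$ to form a subgroup $P_i$ obtained from $P$ by replacing $G(r,n_i)$ by $G(r,n_i+1)$. By transitivity of induction, computing the $i$-th summand reduces to computing
$$\Ind_{G(r,n_i)\times G(r,1)}^{G(r,n_i+1)}\bigl((\gamma_{n_i}^{\otimes i}\otimes\tilde\rho_{\lambda^{(i)}})\odot \gamma_1^{\otimes i}\bigr).$$
The key observation is that the restriction of $\gamma_{n_i+1}^{\otimes i}$ to $G(r,n_i)\times G(r,1)$ equals the external product $\gamma_{n_i}^{\otimes i}\cdot \gamma_1^{\otimes i}$; hence by the push-pull identity \eqref{theta} this induction equals $\gamma_{n_i+1}^{\otimes i}\otimes \Ind_{G(r,n_i)\times G(r,1)}^{G(r,n_i+1)}(\tilde\rho_{\lambda^{(i)}}\odot \mathbf{1})$, where $\mathbf{1}$ denotes the trivial character of $G(r,1)$. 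A short character computation shows that inducing a representation of $G(r,k)\times G(r,l)$ pulled back from $S_k\times S_l$ yields the pullback to $G(r,k+l)$ of the corresponding symmetric group induction, which reduces the remaining computation to the classical branching rule $\rho_{\lambda^{(i)}}\uparrow^{S_{n_i+1}}_{S_{n_i}}= \bigoplus_{\nu\in R^+_{\lambda^{(i)}}}\rho_\nu$.

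Reassembling via the outer induction from $P_i$ up to $G(r,n+1)$ and invoking the definition of the irreducibles from Proposition \ref{rapp di grn} yields exactly the claimed formula
$$\rho_{\lambda^{(0)},\ldots,\lambda^{(r-1)}}\uparrow^{G(r,n+1)} = \bigoplus_{(\mu^{(0)},\ldots,\mu^{(r-1)})\in R^+_{\lambda^{(0)},\ldots,\lambda^{(r-1)}}} \rho_{\mu^{(0)},\ldots,\mu^{(r-1)}},$$
since the right hand side precisely enumerates all ways of adding a single box to the multidiagram $(\lambda^{(0)},\ldots,\lambda^{(r-1)})$. The restriction formula then follows by Frobenius reciprocity: the multiplicity of $\rho_{\mu^{(0)},\ldots,\mu^{(r-1)}}$ in $\rho_{\lambda^{(0)},\ldots,\lambda^{(r-1)}}\downarrow_{G(r,n-1)}$ equals the multiplicity of $\rho_{\lambda^{(0)},\ldots,\lambda^{(r-1)}}$ in $\rho_{\mu^{(0)},\ldots,\mu^{(r-1)}}\uparrow^{G(r,n)}$, which by the rule just proved is $1$ precisely when $(\mu^{(0)},\ldots,\mu^{(r-1)})\in R^-_{\lambda^{(0)},\ldots,\lambda^{(r-1)}}$ and $0$ otherwise. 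The main obstacle is keeping the bookkeeping of the twisting characters $\gamma_{n_i}^{\otimes i}$ coherent across the chain of inductions; once the push-pull identity and the lifting property for inductions of pulled-back representations are isolated cleanly, the remainder is a direct consequence of the classical $S_n$ branching rule.
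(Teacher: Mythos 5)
Your argument is correct, but it is worth noting that the paper does not actually prove this statement: the general branching rule for $G(r,n)$ is stated without proof, and even the special case of $B_n$ is handled by a citation to the literature (Hiss--Kessar). What you supply is therefore a self-contained derivation from the explicit construction of the irreducibles in Proposition \ref{rapp di grn}, and the chain of reductions is sound: transitivity of induction through $P\times G(r,1)$, the decomposition of the regular representation of $G(r,1)=C_r$ into the characters $\gamma_1^{\otimes i}$, the multiplicativity of $\gamma_{n_i+1}^{\otimes i}$ across blocks combined with the projection formula \eqref{theta}, the fact that inducing a pullback along $g\mapsto |g|$ from $G(r,k)\times G(r,l)$ to $G(r,k+l)$ commutes with the corresponding symmetric-group induction (a quick coset-representative/character check, since one may choose the representatives inside $S_{k+l}$), and finally the classical branching rule for $S_{n_i}\subset S_{n_i+1}$; Frobenius reciprocity then gives the restriction half. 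Two small points deserve a sentence in a polished write-up: the subgroup $P_i$ containing both $P\times G(r,1)$ and the block $G(r,n_i+1)$ only exists after conjugating by a permutation that moves coordinate $n+1$ next to the $i$-th block, which is harmless since induction is insensitive to such conjugation; and the final reassembly uses the compatibility of induction with external products, i.e.\ identity \eqref{inducedproducts}. What your approach buys is independence from the cited source and a proof mechanism that transparently explains \emph{why} exactly one box is added to exactly one constituent $\lambda^{(i)}$: the $r$ choices of $i$ come from the $r$ characters of the extra $C_r$ factor, and the choice of addable box comes from the $S_n$ branching rule within that block.
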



Let us check that $M_{m,r}$ satisfies properties b) of Proposition \ref{grnindres}, so that Theorem \ref{nofixedpointsG(r,n)} follows.

Property b0) is trivial and so we look for property b1): for $S_0,\ldots,S_{r-1}$  disjoint subsets of $[2m]$ such that $\cup S_i=[2m]$ we let
\begin{align*}
\Delta_{S_0,\ldots,S_{r-1}}\eqdef \{&v\,|\,\, v \mbox { is an absolute involution of } G(r,2m) \mbox{ with: }\\ &\fix_0(v)=\ldots =\fix_{r-1}(v)=0; z_i(v)=j \,\, \mbox{ iff } \,i\,\in S_j\},
\end{align*}
and
$$C_{S_0, \ldots, S_{r-1}}=\sum_{v\in \Delta_{S_0, \ldots, S_{r-1}}}C_v\in M.$$

\begin{lem}\label{1-rowed rapp G(r,n)}
The subspace of $M_{m,r}$ spanned by all $C_{S_0, \ldots, S_{r-1}}$, with $|S_i|=p_i$,
is an irreducible submodule of $(M_{m,r},\varphi)$ affording the representation  $\rho_{\iota_{2p_{0}},\ldots,\iota_{2p_{r-1}}}$.
\end{lem}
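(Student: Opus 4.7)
The plan is to imitate the proof of Lemma \ref{1-rowed rapp}, replacing the two colors by $r$. First I would fix a reference $r$-partition of $[2m]$, for instance
$$T_i\eqdef[2(p_0+\cdots+p_{i-1})+1,\,2(p_0+\cdots+p_i)],\qquad i=0,\ldots,r-1,$$
with $|T_i|=2p_i$, and focus on the one-dimensional subspace $\mathbb C\,C_{T_0,\ldots,T_{r-1}}$ of $M_{m,r}$. The Young subgroup
$$H\eqdef G(r,2p_0)\times\cdots\times G(r,2p_{r-1})\leq G(r,2m)$$
acts on $\Delta_{T_0,\ldots,T_{r-1}}$ by absolute conjugation, since $|g|$ with $g\in H$ preserves each $T_i$. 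In fact $H$ coincides with the absolute-conjugation stabilizer of $C_{T_0,\ldots,T_{r-1}}$: for general $g\in G(r,2m)$ and $v\in\Delta_{T_0,\ldots,T_{r-1}}$, the involution $|g|v|g|^{-1}$ has color $i$ on $|g|(T_i)$, forcing $|g|\in S_{2p_0}\times\cdots\times S_{2p_{r-1}}$.

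Next I would compute the character by which $H$ acts on $\mathbb C\,C_{T_0,\ldots,T_{r-1}}$ under $\varphi$. Because $v\mapsto|g|v|g|^{-1}$ permutes $\Delta_{T_0,\ldots,T_{r-1}}$ for $g\in H$, and because the color pairing
$$<g,v>\;=\;\sum_{j=1}^{2m}z_j(g)z_j(v)\;=\;\sum_{i=0}^{r-1}i\cdot z(g_i)$$
is constant in $v\in\Delta_{T_0,\ldots,T_{r-1}}$ (it only sees the block structure), the action reads
$$\varphi(g)(C_{T_0,\ldots,T_{r-1}})=\zeta_r^{\sum_{i=0}^{r-1}i\cdot z(g_i)}\,C_{T_0,\ldots,T_{r-1}}.$$
Recognizing the $i$-th factor $g_i\mapsto\zeta_r^{i\cdot z(g_i)}$ as $\gamma_{2p_i}^{\otimes i}$, and using that $\rho_{\iota_{2p_i}}$ is the trivial Specht representation of $S_{2p_i}$, this identifies the one-dimensional $H$-module as
$$(\mathbb C\,C_{T_0,\ldots,T_{r-1}},\,\varphi|_H)\;\cong\;\bigodot_{i=0}^{r-1}\bigl(\gamma_{2p_i}^{\otimes i}\otimes\tilde\rho_{\iota_{2p_i}}\bigr).$$

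Finally I would induce from $H$ up to $G(r,2m)$ and invoke Proposition \ref{rapp di grn}, which immediately gives $\rho_{\iota_{2p_0},\ldots,\iota_{2p_{r-1}}}$. To match this induced module with the subspace in the statement, the key observation is that since $H$ is the full stabilizer of $C_{T_0,\ldots,T_{r-1}}$ under absolute conjugation, the $G(r,2m)$-orbit of this vector consists exactly of the $C_{S_0,\ldots,S_{r-1}}$ with $|S_i|=2p_i$; therefore
$$\Ind_H^{G(r,2m)}\!\bigl(\mathbb C\,C_{T_0,\ldots,T_{r-1}},\varphi|_H\bigr)=\bigoplus_{\substack{(S_0,\ldots,S_{r-1})\\|S_i|=2p_i}}\mathbb C\,C_{S_0,\ldots,S_{r-1}},$$
and the statement follows. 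The only point requiring any care is the color bookkeeping that extracts the product character $\prod_i\zeta_r^{i\cdot z(g_i)}$ from the definition of $<g,v>$; everything else is a verbatim transcription of the $r=2$ argument.
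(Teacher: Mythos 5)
Your proof is correct and follows exactly the route the paper intends: the paper's own proof of this lemma is just the remark that the $B_n$ argument (Lemma \ref{1-rowed rapp}) carries over verbatim via Proposition \ref{rapp di grn}, and your write-up is precisely that transcription (identifying the stabilizer $H$, extracting the character $\zeta_r^{\sum_i i\,z(g_i)}$, and matching the induced module with the span of the orbit $\{C_{S_0,\ldots,S_{r-1}}\}$). The only discrepancies are harmless: you correctly use $\zeta_r^{\langle g,v\rangle}$ where the paper's displayed definition of $\varphi$ for general $r$ has a typo $(-1)^{\langle g,v\rangle}$, and you correctly write $|S_i|=2p_i$ where the statement has the typo $|S_i|=p_i$.
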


\begin{proof}
 This proof can be carried on in the same way as in the case of $B_n$,  relying on Proposition \ref{rapp di grn}.
\end{proof}

Let us turn to property b2). We have to check that $$M_{m,r}\downarrow_{G(r,2m-1)}\cong M_{m-1,r}\uparrow^{G(r,2m-1)}.$$
We let $M_{p_0,\ldots,p_{r-1}}=M(c_{0,\ldots,0,p_0,\ldots,p_{r-1}})$.
First of all, the following decomposition holds:
\begin{align*}
M_{m,r}\downarrow_{G(r,2m-1)}&=\bigoplus_{p_0+\cdots+p_{r-1}=m}
M_{p_0, \ldots,p_{r-1}}\downarrow_{G(r,2m-1)}\\&=\bigoplus_{p_0+\cdots+p_{r-1}=m}\bigoplus_{j=o}^{r-1}M_{p_0,\ldots,p_{r-1}}^j,
\end{align*}
$M_{p_0,\ldots,p_{r-1}}^j$ being the submodule of $M_{p_0, \ldots,p_{r-1}}$ spanned by the absolute involutions $v$ such that $z_{2m}(v)=j$.

As the module $M_{p_0,\ldots,p_{r-1}}^j$ is trivial whenever $p_j=0$, we can reduce ourselves to
$$\bigoplus_{q_0+\cdots+q_{r-1}=m-1}\bigoplus_{j=o}^{r-1}M_{q_0,\ldots,q_j+1, \ldots,q_{r-1}}^j$$
We introduce the absolute involution $$v_{q_0,\ldots,q_{r-1}}^j\eqdef[(2,1,4,3,...,2m,2m-1);
\underbrace{0,0,...0}_{2q_0},\underbrace{1,...,1}_{2q_1}, \ldots, \underbrace{j, \ldots, j}_{2q_j}, \ldots,\underbrace{r-1, \ldots, r-1}_{2q_{r-1}},j,j].$$
Its stabilizer with respect to the absolute conjugation does not depend on $j$: it is the subgroup of $G(r,2m-1)$ given by
$$H_{q_0,\ldots,q_{r-1}}=\{g \in G:|g| \in S_{2q_0}\times \ldots \times S_{2q_{r-1}}, |g|(i+1)=|g|(i)\pm 1\, \forall \,i
\mbox{ odd, } 0< i< 2m\}.$$
Thus, our module can be written as
$$\bigoplus_{q_0, \ldots,q_{r-1 }=m-1}\bigoplus_{j=o}^{r-1}M_{q_0,\ldots,q_j+1, \ldots,q_{r-1}}^j=
\bigoplus_{q_0+\cdots+q_{r-1}=m-1}\bigoplus_{j=o}^{r-1}(\mathbb{C}\,v_{q_0,\ldots,q_{r-1}}^j)\big \uparrow_{H_{q_0,\ldots,q_{r-1}}}^{G(r, 2m-1)}.$$

As for the right side of the isomorphism, we have
$$
M_{m-1,r}\uparrow^{G(r,2m-1)}=\bigoplus_{q_0+\cdots+q_{r-1}=m-1}M_{q_0, \ldots,q_{r-1}}\big\uparrow^{G(r,2m-1)}.
$$
We choose this time
$$u_{q_0, \ldots,q_{r-1}}\eqdef[(2,1,4,3,...,2m-2,2m-3);
\underbrace{0,0,...0}_{2q_0},\underbrace{1,...,1}_{2q_1},  \ldots,\underbrace{r-1, \ldots, r-1}_{2q_{r-1}}],$$
whose stabilizer with respect to the absolute conjugation in $G(r, 2(m-1))$ is
$$K_{q_0,\ldots,q_{r-1}}=\{g \in G:|g| \in S_{2q_0}\times \ldots \times S_{2q_{r-1}}, |g|(i+1)=|g|(i)\pm 1\, \forall \,i
\mbox{ odd, } 0< i< 2m-2\}.$$
We observe that $K_{q_0,\ldots,q_{r-1}}$ is a subgroup of index $r$ in $H_{q_0,\ldots,q_{r-1}}$, and a system of coset representatives is given by $$C=\{\sigma_i\eqdef[\Id;\underbrace{0,...,0}_{2(m-1)},i,0]\}_{i=0,\ldots, r-1}.$$

So we can split the induction into two steps, and we get
\begin{align*}
M_{m-1,r}\uparrow^{G(r,2m-1)}&=\bigoplus_{q_0+\cdots+q_{r-1}=m-1}M_{q_0, \ldots,q_{r-1}}\big\uparrow^{G(r,2m-1)}\\
&=
\bigoplus_{q_0+\cdots+q_{r-1}=m-1}
(\mathbb{C}\,u_{q_0,\ldots,q_{r-1}})\big \uparrow_{K_{q_0,\ldots,q_{r-1}}}^{G(r,2m-1)}\\
&=
\bigoplus_{q_0+\cdots+q_{r-1}=m-1}
\left((\mathbb{C}\,u_{q_0,\ldots,q_{r-1}})\big\uparrow_{K_{q_0,\ldots,q_{r-1}}}^{H_{q_0,\ldots,q_{r-1}}}\right)\Big\uparrow_{H_{q_0,\ldots,
q_{r-1}}}^{G(r,2m-1)}
\end{align*}

So we are enquiring if
\begin{align*}\bigoplus_{q_0+\cdots+q_{r-1}=m-1}&\bigoplus_{j=o}^{r-1}\left(\mathbb{C}\,v_{q_0,\ldots,q_{r-1}}^j\right) \big\uparrow_{H_{q_0,\ldots,q_{r-1}}}^{G(r, 2m-1)}\cong\\
&\bigoplus_{q_0+\cdots+q_{r-1}=m-1}
\left((\mathbb{C}\,u_{q_0,\ldots,q_{r-1}})\big\uparrow_{K_{q_0,\ldots,q_{r-1}}}^{H_{q_0,\ldots,q_{r-1}}}\right)\Big\uparrow_{H_{q_0,\ldots,
q_{r-1}}}^{G(r,2m-1)},
\end{align*}
and all we need to show is that
$$\bigoplus_{j=o}^{r-1}\mathbb{C}\,v_{q_0,\ldots,q_{r-1}}^j \cong(\mathbb{C}\,u_{q_0,\ldots,q_{r-1}})\big\uparrow_{K_{q_0,\ldots,q_{r-1}}}^{H_{q_0,\ldots,q_{r-1}}}$$
as ${H_{q_0,\ldots,q_{r-1}}}$-modules.

Let us compute characters. The character $\chi_1$ of the representation on the left is given by
\begin{align*}
\chi_1(g)&=\sum_{j=0}^{r-1}\zeta_r^{<g,v_{q_0,\ldots,q_{r-1}}^j>}=\zeta_r^{<g,u_{q_0,\ldots,q_{r-1}}>}
\sum_{j=0}^{ r-1}\zeta_r^{jz_{2m-1(g)}}\\
&=\left\{
\begin{array}{ll}0 &\mbox{ if }z_{2m-1}(g)\neq 0;\\
 r \zeta_r^{<g,u_{q_0,\ldots,q_{r-1}}>} &\mbox{ if } z_{2m-1}(g)= 0.                                                                                                              \end{array} \right.
\end{align*}

As for the character $\chi_2$ of the representation on the right, we have
\begin{align*}
\chi_2(g)&=\sum_{\begin{subarray}{c}h\in C\\h^{-1}gh \in B_{q_0,\ldots,q_{r-1}}\end{subarray} }\chi(h^{-1}gh)\\&=\left\{
\begin{array}{ll}0 \qquad &\mbox{ if }z_{2m-1}(g)\neq 0;\\
 r \zeta_r^{<g,u_{q_0,\ldots,q_{r-1}}>}&\mbox{ if } z_{2m-1}(g)= 0,                                                                                                              \end{array} \right.
\end{align*}
so the two characters agree and the representations are isomorphic.

So we know that the modules $M_{m.r}$ satisfy the conditions of Proposition \ref{grnindres} and to complete the proof of Theorem \ref{nofixedpointsG(r,n)}, 
%
%
%
generalizing what was done for $B_n$, it suffices to show that there exist representations $\sigma_0$  of $S_{2p_{0}}, \ldots,\sigma_{r-1}$ of  $S_{2p_{r-1}}$ such that
\begin{equation}\label{sita2}
(M_{p_{0}, \ldots, p_{r-1}},\varphi )\cong\Ind_{G(r,2p_0)\times \ldots G(r,2p_{r-1})}^{G(r,2m)}(\tilde \sigma_0 \odot (\gamma_{2(p_{1})} \otimes \tilde \sigma_1)\odot \dots \odot (\gamma_{2(p_{r-1})}^{r-1} \otimes \tilde \sigma_{r-1}) ),
\end{equation}
where the $\tilde \sigma_i$'s are the natural extensions of $\sigma_i$ to $G(r,2p_{i})$.

If we set $S_i\eqdef[p_0+\cdots+p_{i-1}+1,p_0+\cdots+p_{i-1}+p_{i}]$, we
consider the vector space 
$M_{S_0,\ldots,S_{r-1}}\eqdef\Span\{C_v:v\in \Delta_{S_0, \ldots, S_{r-1}}\}$.
We have
$$M_{p_{0}, \ldots, p_{r-1}}=M_{S_0,\ldots,S_{r-1}}\uparrow_{G(r,2p_0)\times \cdots \times G(r,2p_{r-1})}^{G(r,2m)}.$$
Let us define $M_i\eqdef \Span \{C_{v_i}:v_i \mbox{ is an involution in }S_{2p_i}\}$. Then
\begin{align*}
M_{S_0,\ldots,S_{r-1}}&\cong M_0 \times \cdots \times M_{r-1}\\
C_{v_0, \ldots, v_{r-1}}&\mapsto C_{v_0}\otimes \zeta_rC_{v_1} \otimes\cdots \otimes \zeta_r^{r-1} C_{v_{r-1}}
\end{align*}
Arguing as for $B_n$, let $g=g_0,g_1, \ldots, g_{r-1}\in {G(r,2p_0)\times \cdots \times G(r,2p_{r-1})}$. We get \begin{eqnarray*}
&\varphi(g)C_{v_0}\otimes \cdots \otimes C_{v_{r-1}}\leftrightarrow \varphi(g)C_v
= (\zeta_r)^{<g,v>}C_{|g|v|g|^{-1}}\\
&\leftrightarrow C_{|g_0|v_0|g_0|^{-1}}\otimes (\zeta_r)^{z(g_1)}C_{|g_1|v_1|g_1|^{-1}} \otimes \cdots \otimes  (\zeta_r)^{(r-1)z(g_{r-1})}C_{|g_{r-1}|v_{r-1}|g_{r-1}|^{-1}}
\end{eqnarray*}
and Equation \eqref{sita2} is achieved. Our claim follows from the irreducible decomposition of the representations $\sigma_i$, the 
ption of the irreducible representations of $G(r,n)$ in Proposition \ref{rapp di grn}, and Theorem \ref{nofixedpointsG(r,n)}.

Before leaving the module $M_{m,r}$ with no fixed points and going on to study the decomposition of the whole model $M$, we only need to show that stepping from $\varphi$ to $\varrho$ is just like exchanging rows and columns.
Up to obvious modifications, this result can be attained just as it was done in the case of $B_n$, so we will not treat it.

Summing up, at this point we can give for granted that:
\begin{equation}\label{raffinato}
(M_{p_{0},\ldots, p_{r-1}}, \varrho)\cong\bigoplus_{\substack{
                                                            |\lambda_i|=2p_{i}\\
                                                            \lambda_i
                                                     \textrm{ with
                                                            no odd
                                                            columns}
                                                            }}
 \rho_{\lambda_0, \ldots,\lambda_{r-1}}.
\end{equation}

Let us take a step forward towards the proof of Theorem \ref{mainth}: we are now dealing with the modules $M(c_{f_0,\ldots,f_{r-1}, p_0,\ldots,p_{r-1}})$, where $f_0+\ldots+f_{r-1}+2 p_0,\ldots+2p_{r-1} =n$. Let $p_0+\ldots+p_{r-1}=m$ and let us consider the $G(r, 2m)$-module  $$\Pi_{p_0,\ldots, p_{r-1}}\eqdef\big(M_{p_0,\ldots, p_{r-1}}, \varrho\big).$$ We know its irreducible decomposition from \eqref{raffinato}.
Arguing as above, we can infer that
\begin{equation} \label{pappappero2}
(M(c_{f_{0}, \ldots f_{r-1}, p_{0}, \ldots p'_{r-1}}),\varrho)\cong \Ind_{G(r,2m)\times G(r,n-2m)}^{G(r,n)}(\Pi_{m,r}^{p_0,\ldots, p_{r-1}}\odot \rho_{\iota_{f_0},\ldots,\iota_{f_{r-1}}}),
\end{equation}
and Theorem \ref{mainth} follows from the $G(r,n)$-version of Pieri rule.

\emph{E-mail address: }{\tt caselli,fulci@dm.unibo.it}
\end{document}